\theoremstyle{plain}
\newtheorem{theorem}                 {Theorem}      [section]
\newtheorem{lemma}        [theorem]  {Lemma}
\theoremstyle{definition}
\newtheorem{definition}   [theorem]  {Definition}
\newtheorem{example}      [theorem]  {Example}
\newtheorem{remark}       [theorem]  {Remark}
\newtheorem*{acknowledge}{Acknowledgments}
\newcommand{\eq}{=}
\numberwithin{equation}{section}
\def \rn{{\mathbb R}}
\def \cn{{\mathbb C}}
\def \hn{{\mathbb H}}
\def \nn{{\mathbb N}}
\def \N{{\mathbb N}}
\def \Oct{{\mathbb O}}
\def \nab#1#2{\hbox{$\nabla$\kern -.3em\lower 1.0 ex
    \hbox{$#1$}\kern -.1 em {$#2$}}}
\def \a{\mathfrak{a}}
\def \g{\mathfrak{g}}
\def \h{\mathfrak{h}}
\def \k{\mathfrak{k}}
\def \m{\mathfrak{m}}
\def \n{\mathfrak{n}}
\def \r{\mathfrak{r}}
\def \s{\mathfrak{s}}
\def \un{\mathfrak{u}}
\DeclareMathOperator{\ad}{ad}
\DeclareMathOperator{\Aut}{Aut}
\def \SL2{\widetilde{\text{\bf SL}}_{2}(\rn)}
\numberwithin{equation}{section}
\begin{document}

\title
[$p\,$-Harmonic Functions on rank-one Lie Groups of Iwasawa type]
{Explicit $p\,$-Harmonic Functions on\\ rank-one Lie groups of Iwasawa type}

\author{Sigmundur Gudmundsson}
\address{Mathematics, Faculty of Science\\ University of Lund\\
Box 118, Lund 221\\
Sweden}
\email{Sigmundur.Gudmundsson@math.lu.se}

\author{Anna Siffert}
\address{Mathematisches Institut\\
Einsteinstr. 62\\
48149 M\" unster\\
Germany}
\email{ASiffert@uni-muenster.de}

\author{Marko Sobak}
\address{Mathematisches Institut\\
Einsteinstr. 62\\
48149 M\" unster\\
Germany}
\email{MSobak@uni-muenster.de}

\begin{abstract}
We construct explicit proper $p$-harmonic functions on rank-one Lie groups of Iwasawa type.
This class of Lie groups includes many classical Riemannian manifolds such as the rank-one symmetric spaces of non-compact type, Damek-Ricci spaces, rank-one Einstein solvmanifolds and Carnot spaces.
\end{abstract}

\subjclass[2010]{53A07, 53C42, 58E20}

\keywords{$p$-Harmonic functions, symmetric spaces}

\maketitle


\section{Introduction} \label{section-introduction}

Mathematicians and physicists alike have been interested in the biharmonic equation $\Delta^2\phi = 0$, where $\Delta$ is the Laplace operator, for almost two centuries. The biharmonic equation makes an appearance in fields such as continuum mechanics, elasticity theory, as well as two-dimensional hydrodynamics problems involving Stokes flows of incompressible Newtonian fluids. Biharmonic functions have quite a rich and interesting history, a survey of which can be found in the article \cite{Mel}.

\smallskip

The literature on biharmonic functions is vast, but with only very few exceptions the domains are either surfaces or open subsets of flat Euclidean space, see for example \cite{Bai-Far-Oua}. The development of
the last few years has changed this and can be traced at the regularly
updated online bibliography \cite{Gud-p-bib} maintained by the first
author. The main object of study are \textit{$p$-harmonic functions} (alt.\ \textit{polyharmonic of order $p$}) on Riemannian manifolds $(M,g)$. These are defined as solutions $\phi: (M,g) \to \cn$ of the 
$p$-harmonic equation
\begin{equation*}
\tau^p(\phi) = 0
\end{equation*}
for a positive integer $p$.
Here, $\tau$ is the Laplace-Beltrami operator on $(M,g)$ extended to complex-valued functions by requiring it to be complex linear, and $\tau^p$ denotes its $p$-ith iterate defined inductively by
\begin{equation*}
\tau^0(\phi) = \phi, \quad \tau^p(\phi) = \tau(\tau^{p-1}(\phi)).
\end{equation*}
Since each $p$-harmonic function is trivially $r$-harmonic for any $r \geq p$, one is usually interested in finding the lowest $p$ for which a function is $p$-harmonic.
Thus, one says that a function is \textit{proper $p$-harmonic} if it is $p$-harmonic but \textit{not} $(p-1)$-harmonic.
Note that in local coordinates the $p$-harmonic equation is a linear elliptic partial differential equation of order $2p$. 
This makes the construction of explicit proper $p$-harmonic functions a challenging task.

\smallskip

In this work we provide explicit examples of globally defined proper $p$-harmonic functions on rank-one Lie groups $S$ of Iwasawa type, see Definition \ref{definition-Iwasawa}. 
These Lie groups form a wide class of important Riemannian manifolds which among others includes
the rank-one symmetric spaces of non-compact type, Damek-Ricci spaces, rank-one Einstein solvmanifolds and Carnot spaces. Every Lie group of this type is diffeomorphic to its tangent space at the neutral element and hence allows for a global coordinate system $(t,x)$.  This way we get hold of an explicit expression for the corresponding Laplace-Beltrami operator, which is in general rather complicated. When restricted to a carefully chosen class of functions the Laplace-Beltrami operator simplifies in 
such a way that, for
any integer $p$, it allows the construction of a plethora of explicit proper $p$-harmonic functions on rank-one Lie groups of Iwasawa type. We thus obtain the following.

\medskip

\noindent\textbf{Main Result:} For any $p\in\N_+$ we construct proper $p$-harmonic functions $\phi_p,\psi_p : S \to \cn$ of the form
\begin{eqnarray*}
\phi_p(t,x) &=& h(x)\log(t)^{p-1}  + \text{correction terms}, \\[0.2cm]
\psi_p(t,x) &=& h(x)\, t^n \log(t)^{p-1}  + \text{correction terms},
\end{eqnarray*}
where $h$ is an appropriately chosen function independent of $t$ and
$n$ is a real number depending on the geometry of $S$.

\bigskip

\noindent \textbf{Organisation:} In Section\,\ref{section-solvable-lie-groups} we provide the necessary background on rank-one Lie groups of Iwasawa type. Afterwards we provide important examples of such Lie groups in Section\,\ref{section-special-spaces}.
In the main section, Section\,\ref{main_section}, we construct the above mentioned explicit proper $p$-harmonic functions on rank-one Lie groups of Iwasawa type. 

\begin{acknowledge}
The second and third named authors are grateful for the
funding by the Deutsche Forschungsgemeinschaft (DFG, German Research Foundation) under Germany's Excellence Strategy EXC 2044-390685587, Mathematics M\"unster: Dynamics-Geometry-Structure.
\end{acknowledge}

\section{Rank-one Lie groups of Iwasawa type}
\label{section-solvable-lie-groups}

In this section we describe the structure of the Riemannian manifolds that we are investigating in this work.
They generalise Riemannian symmetric spaces of non-compact type, which can be presented as solvable Lie groups, via the Iwasawa decomposition of their semisimple isometry group.
\smallskip

\begin{definition}\cite{Heb}\label{definition-Iwasawa}
A {\it Lie group of Iwasawa type} is a simply connected solvable Riemannian Lie group $(S,g)$ whose Lie algebra $\s$ satisfies the following conditions:
\begin{itemize}
\item[(1)] $\s=\a\oplus\n$, where $\n=[\s,\s]$ and $\a=\n^\perp$ is abelian,
\item[(2)] $\ad(A)|_\n : \n \to \n$ is symmetric for all $A \in \a$,
\item[(3)] $\ad(A)|_\n : \n \to \n$ is positive-definite for some $A \in \a$.
\end{itemize}
The {\it rank} of a Lie group of Iwasawa type is the dimension of the subspace $\a$.
\end{definition}

For the rest of this section let us assume that $S$ is a rank-one Lie group of Iwasawa type.
Since the subspace $\a$ is one-dimensional in this case, there exists a unique element $A\in\a$ of unit length satisfying conditions (2) and (3) from Definition \ref{definition-Iwasawa}.
The spectral theorem yields the orthogonal decomposition
\begin{equation*}
\n = \bigoplus_{i=1}^m \n_i
\end{equation*}
of $\n$ into the non-trivial real eigenspaces
\begin{equation*}
\n_i = \{ X \in \n \,\mid\, \ad(A)|_\n\, X = \lambda_i X \}
\end{equation*}
of $\ad(A)|_\n$, where the eigenvalues satisfy $0 < \lambda_1 < \ldots < \lambda_m$ by the positive-definiteness of $\ad(A)|_n$.
The Lie algebra $\s=\a\oplus\n$ can therefore be viewed as the semidirect product $\a \ltimes_\pi \n$ with respect to the Lie algebra homomorphism
\begin{equation*}
\pi : \a \to \text{Der}(\n), \quad \pi(A)|_{\n_i} = \lambda_i \cdot \mathrm{id}_{\n_i}.
\end{equation*}
At the group level, this means that $S$ is the semidirect product of the standard $\rn^+$ and the unique simply connected Lie group $N$ with Lie algebra $\n$.
Thus we have $S = \rn^+ \ltimes_\mu N$ with
\begin{equation*}
\mu : \rn^+ \to \Aut(N), \quad \mu(t) \exp_\n(X) = \exp_\n\left( t^{\lambda_1} X^1 + \ldots + t^{\lambda_m} X^m \right),
\end{equation*}
where $X = X^1 + \ldots + X^m \in \n$ is the unique decomposition of $X$ such that $X^i \in \n_i$.
Written down explicitly, this means that the group law is
\begin{equation*}
(t, \exp_\n(X))(s, \exp_\n(Y)) = (ts, \; \exp_\n(X)\exp_\n(\mu(t)Y)).
\end{equation*}

\smallskip

For each of the eigenspaces $\n_i$ of $\ad(A)|_\n:\n\to\n$ we now fix an orthonormal basis
\begin{equation*}
\{X^i_{j} \in \n_i \,\mid\, j = 1,\ldots, n_i\}
\end{equation*}
of $\n$, with $n_i = \dim\n_i$.
For later purposes we also denote the structure constants of the Lie algebra $\n$ in the chosen basis by
\begin{equation*}
A^{ik\alpha}_{j\ell\beta} = \langle [X^i_j, X^k_\ell], X^\alpha_\beta \rangle.
\end{equation*}
Observe that since the operator $\ad(A)|_\n$ is a derivation, we have
\begin{equation*}
\ad(A)|_\n[X^i_j, X^k_\ell] = [\ad(A)|_\n X^i_j, X^k_\ell] + [X^i_j, \ad(A)|_\n X^k_\ell] = (\lambda_i + \lambda_k)[X^i_j, X^k_\ell].
\end{equation*}
This implies that $[X^i_j, X^k_\ell]$ is either zero, or else it must be an eigenvector of $\ad(A)|_\n$ with eigenvalue $\lambda_i+\lambda_k > \max(\lambda_i,\lambda_k)$.
Thus,
\begin{equation}\label{eq-structure-coefficients-zero}
A^{ik\alpha}_{j\ell\beta} = 0\ \ \text{if}\ \ \alpha \leq \max(i,k).
\end{equation}
In particular, this argument shows that $\n$ is at most $m$-step nilpotent.

Owing to the nilpotency of $\n$, the exponential mapping $\exp_\n : \n \to N$ is a diffeomorphism onto the simply connected Lie group $N$,
so we can obtain a natural global coordinate system $(t,x)$ on $S$ via the diffeomorphism
\begin{equation*}
S \ni (t, \exp_\n(X)) \mapsto (t,x^1_{1}, \ldots, x^1_{n_1}, \ldots, x^m_{1}, \ldots x^m_{n_m}) \in \rn^+ \times \rn^{n_1+\ldots+n_m},
\end{equation*}
where the element $X\in\n$ is uniquely decomposed as
\begin{equation*}
X = \sum_{i=1}^m \sum_{j=1}^{n_i} x^i_j X^i_j.
\end{equation*}
Throughout the paper we will also use the notation
\begin{equation*}
x^i = (x^i_1, \ldots, x^i_{n_i}) \in \rn^{n_i}, \quad i = 1,\ldots,m.
\end{equation*}

In Appendix \ref{appendix-laplace-beltrami} we give a detailed proof of the fact that the Laplace-Beltrami operator $\tau$ on $S$ is given by
\begin{equation}\label{equation-tau-formula}
\tau(\phi) = t^2\,\frac{\partial^2 \phi}{\partial t^2} + (1-n) \, t\, \frac{\partial \phi}{\partial t}
+ \sum_{i,k,\alpha=1}^m\sum_{j,\ell,\beta=1}^{n_i, n_k, n_\alpha} t^{2\lambda_i} P^{ik}_{j\ell}(x) \frac{\partial}{\partial x^k_\ell} \left( P^{i\alpha}_{j\beta}(x) \frac{\partial \phi}{\partial x^\alpha_\beta} \right).
\end{equation}
Here, $n = n_1\lambda_1 + \ldots + n_m \lambda_m$ and $P^{ik}_{j\ell}$ is a family of polynomials, of degree less than $m$, given explicitly by
\begin{equation}\label{eq-structure-polynomials}
P^{i\alpha}_{j\beta}(x)
=
\begin{cases}
\delta_{i\alpha} \, \delta_{j\beta}, & i \geq \alpha,\\[0.2cm]
\displaystyle\sum_{r=1}^{m-1}  \frac{B_r}{r!} \, p^{i\alpha}_{j\beta}(x,r), & i < \alpha,
\end{cases}
\end{equation}
where $B_r$ are the Bernoulli numbers\footnote{Here one should use the convention that $B_1 = 1/2 > 0$.} and $p^{i\alpha}_{j\beta}(x,r)$ are homogeneous polynomials of order $r$ defined inductively by
\begin{eqnarray*}
p^{i\alpha}_{j\beta}(x, 1) &=& \sum_{k=1}^{\alpha-1}\sum_{\ell=1}^{n_k} A^{ki\alpha}_{\ell j\beta} \, x^k_\ell,\\
p^{i\alpha}_{j\beta}(x, r) &=& \sum_{k=1}^{\alpha-1}\sum_{\ell=1}^{n_k} p^{i k}_{j \ell}(x, r-1)\; p^{k\alpha}_{\ell\beta}(x, 1), \quad r = 2,\ldots,m-1.
\end{eqnarray*}
Observe that the polynomials $P^{i\alpha}_{j\beta}$ depend solely on the structure coefficients of the Lie algebra $\n$.

\smallskip

It is evident that the formula for the Laplace-Beltrami operator is quite complicated in its full generality.
However, as one may expect, the formula becomes a lot simpler if we assume that the function we work with does not depend on all of the $x^i$-variables.
These simplifications are contained in the following result, which will be used in the subsequent sections to recover the formulas for the Laplace-Beltrami operator on some classical spaces, as well as for constructing explicit examples later.

\begin{lemma}\label{lemma-tau-x^1-x^2}
Let $S$ be a rank-one Lie group of Iwasawa type,
and let $\Delta_{x^i}$ denote the classical Laplacian in the $x^i$-variables
\begin{equation*}
\Delta_{x^i} = \sum_{j=1}^{n_i} \frac{\partial^2}{(\partial x^i_j)^2}, \quad i = 1,\ldots,m.
\end{equation*}
\begin{enumerate}
\item[(i)] If $h : S \to \cn$ is a function only depending on the $x^1$-variables, then
\begin{equation*}
\tau(h) = t^{2\lambda_1} \Delta_{x^1} \, h.
\end{equation*}

\vskip0.2cm

\item[(ii)] If $h : S \to \cn$ is a function depending only on the $x^1$-  and $x^2$-variables, then
\begin{eqnarray*}
\tau(h) &=&
t^{2\lambda_1} \Delta_{x^1} \, h
+ t^{2\lambda_1}
\sum_{j,\ell=1}^{n_1} \sum_{\beta=1}^{n_2} A^{112}_{j\ell\beta} \, x^1_j \, \frac{\partial^2 h}{\partial x^1_\ell \partial x^2_\beta}\\[0.1cm]
&+& t^{2\lambda_2} \Delta_{x^2} \, h
+ \frac{t^{2\lambda_1}}{4}
\sum_{j,r,s=1}^{n_1} \sum_{\ell,\beta=1}^{n_2}
A^{112}_{rj\ell} \, A^{112}_{sj\beta} \; x^1_r \, x^1_s \, \frac{\partial^2 h}{\partial x^2_\ell \partial x^2_\beta}.
\end{eqnarray*}
\end{enumerate}
\end{lemma}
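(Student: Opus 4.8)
The plan is to substitute the hypotheses directly into the general expression \eqref{equation-tau-formula} and to keep careful track of which index combinations $(i,k,\alpha)$ survive. Since in both parts $h$ is independent of $t$, the terms $t^2\,\partial^2 h/\partial t^2$ and $(1-n)\,t\,\partial h/\partial t$ vanish at once, leaving only the double sum. Because $h$ depends only on the stated variables, the innermost factor $\partial h/\partial x^\alpha_\beta$ forces $\alpha=1$ in part (i) and $\alpha\in\{1,2\}$ in part (ii); moreover the operator $\partial/\partial x^k_\ell$ annihilates anything independent of $x^k$, which further restricts the admissible $k$.

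The key computational inputs are the explicit values of the polynomials $P^{i\alpha}_{j\beta}$ from \eqref{eq-structure-polynomials} for small indices. First, whenever $\alpha=1$ one has $i\geq\alpha$ automatically, so $P^{i1}_{j\beta}=\delta_{i1}\delta_{j\beta}$; in particular $P^{11}_{j\ell}=\delta_{j\ell}$ and likewise $P^{22}_{j\ell}=\delta_{j\ell}$. Second, I would compute $P^{12}_{j\ell}$: in the recursion the term $p^{12}_{j\ell}(x,r)$ for $r\geq 2$ involves $p^{11}_{j\ell'}(x,r-1)$, which vanishes since its defining sum is empty (its upper index runs to $\alpha-1=0$); hence only the $r=1$ term survives and $P^{12}_{j\ell}=\tfrac12\sum_s A^{112}_{sj\ell}\,x^1_s$, using the convention $B_1=1/2$. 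These identities convert the surviving terms into the two Laplacians $t^{2\lambda_1}\Delta_{x^1}h$ and $t^{2\lambda_2}\Delta_{x^2}h$ together with derivative terms weighted by structure constants.

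For part (i) this finishes the argument: with $\alpha=1$ one is forced to $i=1$, $j=\beta$, and then $\partial h/\partial x^1_j$ depends only on $x^1$, so $k=1$ is the only surviving value, giving $t^{2\lambda_1}\Delta_{x^1}h$ via $P^{11}_{j\ell}=\delta_{j\ell}$. For part (ii) I would organise the remaining terms by the pairs $(i,\alpha)$: the pair $(1,1)$ reproduces $t^{2\lambda_1}\Delta_{x^1}h$ from $k=1$ and, from $k=2$, half of the mixed term $\sum A^{112}_{j\ell\beta}\,x^1_j\,\partial^2 h/\partial x^1_\ell\partial x^2_\beta$; the pair $(2,2)$ yields $t^{2\lambda_2}\Delta_{x^2}h$; and the pair $(1,2)$ contributes, through the product rule applied to $P^{12}_{j\beta}\,\partial h/\partial x^2_\beta$, the second half of the same mixed term (from $k=1$) together with the quadratic term carrying the coefficient $\tfrac14 A^{112}_{rj\ell}A^{112}_{sj\beta}$ (from $k=2$).

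The main obstacle, and the point requiring the most care, is the bookkeeping that merges the two independent contributions to the mixed second-order term and the need to check that the spurious first-order term produced by differentiating $P^{12}_{j\beta}$ disappears. Concretely, when $(i,\alpha)=(1,2)$ and $k=1$ the product rule generates a term proportional to $\partial P^{12}_{j\beta}/\partial x^1_j=\tfrac12 A^{112}_{jj\beta}$, and this vanishes identically since $A^{112}_{jj\beta}=\langle[X^1_j,X^1_j],X^2_\beta\rangle=0$. Once this cancellation is recorded and the two half-contributions to the mixed term are added, using the symmetry of mixed partials $\partial^2 h/\partial x^1_\ell\partial x^2_\beta=\partial^2 h/\partial x^2_\beta\partial x^1_\ell$, the claimed formula follows.
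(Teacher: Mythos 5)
Your proposal is correct and follows essentially the same route as the paper's own proof: both substitute into the general formula \eqref{equation-tau-formula}, use $P^{i1}_{j\beta}=\delta_{i1}\delta_{j\beta}$, $P^{22}_{j\beta}=\delta_{j\beta}$ and $P^{12}_{j\beta}=\tfrac12\sum_r A^{112}_{rj\beta}x^1_r$, kill the first-order term via $A^{112}_{jj\beta}=0$, and merge the two half-contributions to the mixed term after renaming indices. Your derivation of $P^{12}_{j\beta}$ from the recursion (emptiness of the sum defining $p^{11}_{j\ell}$) is in fact spelled out in more detail than in the paper, which simply cites \eqref{eq-structure-polynomials}.
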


\begin{proof}
See Appendix \ref{appendix-laplace-beltrami}.
\end{proof}


\section{Examples of rank-one Lie groups of Iwasawa type}\label{section-special-spaces}

We will now briefly list some classical Riemannian manifolds which can be presented as rank-one Lie groups of Iwasawa type.

\subsection{Real Hyperbolic Spaces} Without doubt the best known examples of Riemannian manifolds satisfying the conditions of Definition \ref{definition-Iwasawa} are the real hyperbolic spaces. The space $\rn H^{n+1}$ can be presented as the simply connected Lie group $S=\rn^+ \ltimes \rn^n$ with  Lie algebra
$\s = \a \ltimes \r^n$
whose non-zero bracket relations are given by
\begin{equation*}
[A, X] = X, \quad A \in \a, \; X \in \r^n.
\end{equation*}
In this case, the nilpotent part $\n=[\s,\s]$ is the $n$-dimensional abelian Lie algebra $\r^n$ and the operator $\ad(A)|_{\r^n}$ is the identity map, so the decomposition of $\r^n$ into eigenspaces of $\ad(A)|_{\r^n}$ is trivial in the sense that the entire $\r^n$ is an eigenspace of eigenvalue $1$.  As  an immediate consequence of Lemma \ref{lemma-tau-x^1-x^2} (i) we obtain the well-known Laplace-Beltrami operator on $\rn H^{n+1}$
\begin{equation*}
\tau(\phi) = t^2\,\frac{\partial^2\phi}{\partial t^2} +(1-n)\, t\, \frac{\partial\phi}{\partial t} + t^2 \, \Delta_x\phi.
\end{equation*}
Explicit examples of proper $p$-harmonic functions on $\rn H^{n+1}$ are already known, see the recent work \cite{Gud-15}.

\subsection{Complex Hyperbolic Spaces}\label{subsection-complex-hyperbolic}
The complex hyperbolic space $\cn H^{n+1}$ can be viewed as the simply connected Lie group $S = \rn^+ \ltimes \mathrm H^{2n+1}$, where $\mathrm H^{2n+1}$ is the classical nilpotent Heisenberg group.
For simplicity let us only discuss the case $n=1$ i.e.\ the complex hyperbolic plane $S = \cn H^2$.
Its Lie algebra $\s = \a \ltimes \h^3$ is defined by the non-zero bracket relations
\begin{equation*}
[A,X] = \tfrac{1}{2}X, \quad [A,Y] = \tfrac{1}{2}Y,  \quad [A,Z] = Z, \quad  [X,Y] = Z,
\end{equation*}
where $A \in \a$ and $X,Y,Z \in \h^3$ form an orthonormal basis for the Heisenberg algebra $\h^3$.
In this case the operator $\ad(A)|_{\h^3}$ has two eigenvalues, namely $1/2$ and $1$,
and the nilpotent part decomposes as $\h^3 = \n_1 \oplus \n_2$ with $X,Y \in \n_1$ and $Z \in \n_2$. 
Note that, according to this definition, the sectional curvatures of $\cn H^2$ lie between $-1$ and $-1/4$, and these extremal values are attained in the planes spanned by $A,Z$ and $A,X$ (or $A,Y$), respectively.

As described in Section \ref{section-solvable-lie-groups}, we obtain a global coordinate system on $\cn H^2$,
where we denote the coordinates by
\begin{equation*}
(t,x,y,z) = (t,x^1_1,x^1_2,x^2_1)
\end{equation*}
for simplicity.
Now as a consequence of Lemma \ref{lemma-tau-x^1-x^2} (ii), we see that the Laplace-Beltrami operator on $\cn H^2$ satisfies
\begin{eqnarray*}
\tau(\phi)
&=&
t^2\frac{\partial^2\phi}{\partial t^2} - t \, \frac{\partial\phi}{\partial t}
+ 
t^{2}
\left(\frac{\partial^2\phi}{\partial x^2} + \frac{\partial^2\phi}{\partial y^2} \right)\\[0.2cm]
&+& \frac{t^2(x^2+y^2)+4t^4}{4}\,\frac{\partial^2\phi}{\partial z^2} 
+t^2\left(
x \frac{\partial^2\phi}{\partial y\partial z}
- y \frac{\partial^2\phi}{\partial x\partial z}
\right).
\end{eqnarray*}

\subsection{Homogeneous Spaces of Negative Curvature}\label{subsection-symmetric}

In his paper \cite{Hei}, E. Heintze proves that any Riemannian homogeneous space of negative curvature can be presented as a solvable Lie group $S$ such that the codimension of the derived algebra $[\s,\s]$ has codimension one in $\s$.
In particular, each homogeneous space of negative cuvature satisfies condition (1) of Definition \ref{definition-Iwasawa}. Furthermore, he shows that there exists an element $A \in \a$ such that the symmetric part of $\ad(A)|_\n$ is positive-definite. Thus, a homogeneous space of negative curvature is of Iwasawa type if and only if $\ad(A)|_\n$ is symmetric for all $A \in \a$. As particular examples of such spaces we have the rank-one symmetric spaces of non-compact type i.e.\  the already mentioned real and complex hyperbolic spaces $\rn H^n$ and $\cn H^n$, but also the quaternionic hyperbolic spaces $\hn H^n$ and the $16$-dimensional hyperbolic Cayley plane $\Oct H^2$.

\subsection{Damek-Ricci Spaces}\label{subsection-damek-ricci}

Another class of much studied Riemannian manifolds is that of Damek-Ricci spaces. They generalize the hyperbolic spaces $\cn H^n,\hn H^n, \Oct H^2$ and can be presented as rank-one Lie groups of Iwasawa type. These spaces are solvable Lie groups of the form $S= \rn^+ \ltimes N$, where $N$ is a generalized Heisenberg group.
The operator $\ad(A)|_\n$ on a Damek-Ricci space by definition has eigenvalues $\lambda_1 = 1/2$ and $\lambda_2 = 1$,
and the eigenspace corresponding to the eigenvalue $1$ is the center of $\n$.
For more details on these spaces we refer to \cite{BTV}.

\subsection{Einstein Solvmanifolds}\label{subsection-einstein}
We recall that a Riemannian manifold $(M,g)$ is said to be an \textit{Einstein manifold} if its Ricci tensor is a constant multiple of the metric i.e.\
\begin{equation*}
\mathrm{Ric}_g = cg, \quad c \in \rn.
\end{equation*}
Recall further that a \textit{solvmanifold} is a solvable Lie group $S$ endowed with a left invariant Riemannian metric.
The Lie algebra $\mathfrak{s}$ of any Einstein solvmanifold can be orthogonally decomposed as
$\s = \a \oplus \n$ where $\n = [\s,\s]$ is nilpotent and $\a = \n^\perp$ is abelian -- see \cite{Heb} and \cite{Lau}.
In particular, each Einstein solvmanifold satisfies condition (1) of Definition \ref{definition-Iwasawa}.
Furthermore, if one assumes that $S$ is non-flat, then $S$ is in fact of Iwasawa type, and the eigenvalues of $\ad(A)|_\n$ are positive integers without a common divisor \cite{Heb}. A comprehensive survey on Einstein solvmanifolds can be found in \cite{Lau}.


\section{Explicit proper $p$-harmonic functions}\label{main_section}
In this main section we provide explicit proper $p$-harmonic functions on some Riemannian manifolds. Subsection\,\ref{sub_main} contains the construction of explicit complex-valued proper $p$-harmonic functions on rank-one Lie groups of Iwasawa type.
In Subsection\,\ref{section-duality} we briefly discuss how these solutions induce
$p$-harmonic functions on symmetric spaces of compact type.

\subsection{$p$-Harmonic Functions on Rank-One Lie groups of Iwasawa Type}\label{sub_main}
In this subsection we construct explicit complex-valued proper $p$-harmonic functions on rank-one Lie groups of Iwasawa type. Before we state our main result, let us present concrete examples in two special cases.

\begin{example}\label{example-biharmonic-rh2-ch2}
\leavevmode
\begin{enumerate} 
\item[(i)] The function $\phi : \rn H^2 \to \rn$ given by
\begin{eqnarray*}
\phi(t,x)
&=&
x^{6} \log\left(t\right)
- 15 \, x^4 t^{2} {\left(\log\left(t\right) - 2\right)}\\[0.1cm]
&+&  5 \, x^2 t^{4} {\left(3  \log\left(t\right) - 8\right)} 
-\frac{t^6}{15} \,{\left(15  \log\left(t\right) - 46\right)}
\end{eqnarray*}
is proper biharmonic on $\rn H^2$.

\vskip0.2cm

\item[(ii)] The function $\psi : \cn H^2 \to \rn$ given by
\begin{eqnarray*}
\psi(t,x,y,z)
&=&
z^4t^2\log(t) 
- \frac{(x^2+y^2)z^2t^3}{3}\,(3\log(t) - 20)\\[0.15cm]
&-& \frac{z^2t^4}{4}\,(2\log(t)-1) 
+ \frac{((x^2+y^2)^2 + 8z^2)\,t^4}{96}\,(6\log(t)-70) \\[0.2cm]
&+& \frac{(x^2+y^2)\,t^5}{300} \, (30\log(t)-17)
+ \frac{t^6}{75}\,(5\log(t)-2)
\end{eqnarray*}
is proper biharmonic on $\cn H^2$.
\end{enumerate}
\end{example}

Motivated by these particular examples,
we seek proper $p$-harmonic functions $\phi_p,\psi_p : S \to \cn$ of the form
\begin{eqnarray*}
\phi_p(t,x) &=& h(x)\log(t)^{p-1}  + \text{correction terms}, \\[0.2cm]
\psi_p(t,x) &=& h(x)\, t^n \log(t)^{p-1}  + \text{correction terms},
\end{eqnarray*}
where $h$ is an appropriately chosen function independent of $t$ and we recall that 
$n = n_1\lambda_1 + \ldots + n_m\lambda_m.$
Observe that, since $h$ is independent of $t$, the formula (\ref{equation-tau-formula}) for the Laplace-Beltrami operator implies that there exist functions $h_1,\ldots,h_m$ independent of $t$ such that
\begin{equation*}
\tau(h) = \sum_{k=1}^m h_i(x) \, t^{2\lambda_i}.
\end{equation*}
To be able to state which functions $h$ are suitable for us, we need to fix some notations and definitions first.

For non-negative integers $r,m$, we use the multi-index set notation
\begin{equation*}
I_r^m = \{ \alpha \in \nn^r \,\mid\, 1 \leq \alpha_1,\ldots,\alpha_r \leq m \}.
\end{equation*}

\begin{definition}
Let $S$ be a rank-one Lie group of Iwasawa type and let $h : S \to \cn$ be a function independent of $t$.
Then the \textit{tension tree} of $h$ is the unique collection
\begin{equation*}
\{ h^i_\alpha : S \to \cn \,\mid\, i \in \nn, \ \alpha \in I_i^m \}
\end{equation*}
of functions independent of $t$ such that
\begin{eqnarray*}
\tau(h) &=& \sum_{k=1}^m h^1_k(x) \, t^{2\lambda_k}\\[0.1cm]
\tau(h^i_\alpha) &=& \sum_{k=1}^m h^{i+1}_{(\alpha,k)}(x) \, t^{2\lambda_k},
\quad i \in \nn, \ \alpha \in I_i^m.
\end{eqnarray*}
If there exists a number $r \in \nn$ such that $h^{r+1}_\alpha \equiv 0$ for all $\alpha \in I_{r+1}^m$, then the smallest such number $r$ is said to be the \textit{degree} of the tension tree of $h$, and otherwise the degree is said to be infinite.
The elements $h^i_\alpha$ of the tension tree of $h$ are called \textit{nodes}.
For a fixed $\alpha \in I_r^m$, the sequence
\begin{equation*}
\{ h^i_{(\alpha_1,\ldots,\alpha_i)} \,\mid\, i = 1,\ldots,r\}
\end{equation*}
is called a \textit{branch} of length $r$.
\end{definition}

\begin{example}\label{example-tension-trees}
\leavevmode
\begin{enumerate}
\item[(i)] For a function $h : \rn H^2 \to \cn$ independent of $t$ we have
\begin{equation*}
h^1_1(x) = \frac{\partial^2 h}{\partial x^2}, 
\qquad
h^{i+1}_{(1,\ldots,1)}(x) = \frac{\partial^2 h^i_{(1,\ldots,1)}}{\partial x^2}.
\end{equation*}
E.g.\ the tension tree of the function $h(x) = x^6$ is of degree $3$, with non-zero nodes
\begin{equation*}
h^1_1(x) = 30\,x^4, \qquad h^2_{(1,1)} = 360\, x^2, \qquad h^3_{(1,1,1)} = 720.
\end{equation*}

\vskip0.2cm

\item[(ii)] For a less trivial example, consider a function $h : \cn H^2 \to \cn$ independent of $t$.
Its tension tree satisfies the relations
\begin{eqnarray*} 
h^1_1(x,y,z)
&=&
\frac{\partial^2 h}{\partial x^2} + \frac{\partial^2 h}{\partial y^2}
+ x \,\frac{\partial^2 h}{\partial y\partial z}
- y \,\frac{\partial^2 h}{\partial x\partial z}
+ \frac{x^2+y^2}{4} \, \frac{\partial^2 h}{\partial z^2},\\[0.2cm]
h^1_2(x,y,z)
&=&
\frac{\partial^2 h}{\partial z^2},
\end{eqnarray*}
and analogous formulae hold for $h^i_\alpha$ with $i > 1$.
For example, the tension tree of $z^4$ on $\cn H^2$ is displayed in Figure \ref{fig-tension-tree}.
\end{enumerate}
\end{example}

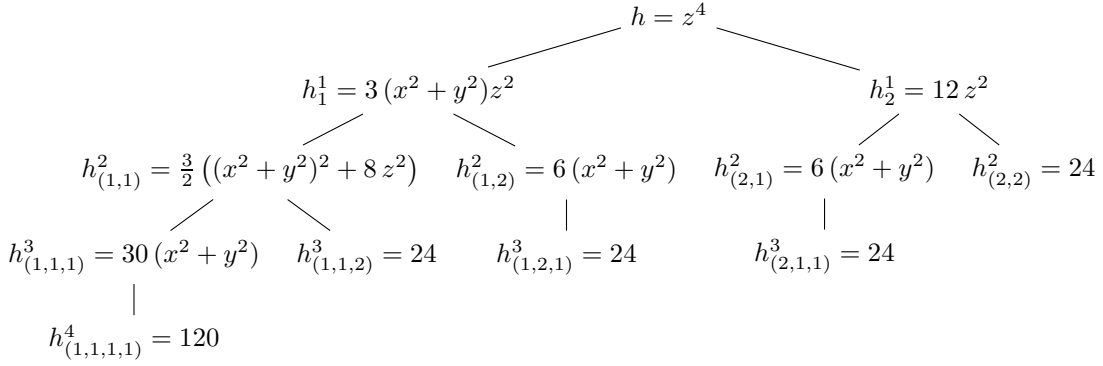
\begin{figure}[t]
\centering

\makebox[\textwidth][c]{
\small
\begin{forest}
[$h \eq z^{4} $ 
	[$h^1_{1} \eq 3 \, (x^{2}+y^2) z^{2} $ 
		[$h^2_{(1, 1)} \eq  \frac{3}{2}\left((x^{2} + y^{2})^2 + 8 \, z^{2}\right) $ 
			[$h^3_{(1, 1, 1)} \eq 30 \, (x^{2} + y^{2}) $ 
				[$h^4_{(1, 1, 1, 1)} \eq 120 $ 
				] 
			] 
			[$h^3_{(1, 1, 2)} \eq 24 $ 
			] 
		] 
		[$h^2_{(1, 2)} \eq 6 \, (x^{2} + y^{2}) $ 
			[$h^3_{(1, 2, 1)} \eq 24 $ 
			] 
		] 
	] 
	[$h^1_{2} \eq 12 \, z^{2} $ 
		[$h^2_{(2, 1)} \eq 6 \, (x^{2} + y^{2}) $ 
			[$h^3_{(2, 1, 1)} \eq 24 $ 
			] i
		] 
		[$h^2_{(2, 2)} \eq 24 $ 
		] 
	] 
]
\end{forest}
}
\caption{The tension tree of $z^4$ on $\cn H^2$.}
\label{fig-tension-tree}
\end{figure}

The observant reader may have noticed a certain resemblance between the biharmonic functions from Example \ref{example-biharmonic-rh2-ch2} 
and the tension trees of the corresponding functions $h$ discussed in Example \ref{example-tension-trees}. 
The following result is a far-reaching generalization of this idea.

\begin{theorem}\label{theorem-p-harmonic-main}
Let $S$ be a rank-one Lie group of Iwasawa type.
Let $h : S \to \cn$ be a function independent of $t$ and suppose that its tension tree $\{h^i_\alpha\}$ is of finite degree $r$.
For $i \in \nn$ and $\alpha \in I_i^m$ put
\begin{equation*}
\Lambda^k_\alpha = \sum_{j=1}^k \lambda_{\alpha_j}, \quad k=1,\ldots,i.
\end{equation*}
Further, for $p,i \in \nn$ and $\alpha \in I_i^m$, define
\begin{eqnarray*}
f^i_{\alpha} (t,p) &=& \frac{t^{2\Lambda^i_\alpha}}{\prod_{k=1}^i \Lambda^k_\alpha} \sum_{j=0}^{p-1}
\sum_{\ell_1 + \ldots + \ell_i = j}
\frac{(-1)^{i+j} \,2^{j-i} \prod_{k=1}^j (p-k)}{\prod_{k=1}^i \left(2\Lambda^k_\alpha-n\right)^{\ell_k+1}}  \log(t)^{p-j-1},\\[0.2cm]
g^i_{\alpha} (t,p) &=& \frac{t^{2\Lambda^i_\alpha+n}}{\prod_{k=1}^i \Lambda^k_\alpha} \sum_{j=0}^{p-1}
\sum_{\ell_1 + \ldots + \ell_i = j}
\frac{(-1)^{i+j} \,2^{j-i} \prod_{k=1}^j (p-k)}{\prod_{k=1}^i \left(2\Lambda^k_\alpha+n\right)^{\ell_k+1}}  \log(t)^{p-j-1},
\end{eqnarray*}
where we additionally assume that
\begin{equation*}
\prod_{k=1}^i (2\Lambda^k_\alpha-n) \not= 0 \quad\text{if}\quad h^i_\alpha \not= 0, \quad\text{for}\quad i \in \nn, \ \alpha \in I_i^m
\end{equation*}
in the definition of $f^i_\alpha$.
Then the functions $\phi_p,\psi_p : S \to \cn$ given by
\begin{eqnarray*}
\phi_p(t,x) &=& h(x)\log(t)^{p-1} + \sum_{i=1}^{r} \sum_{\alpha \in I_i^m} h^i_\alpha(x) \, f^i_{\alpha}(t,p),\\[0.1cm]
\psi_p(t,x) &=& h(x)\,t^n\log(t)^{p-1} + \sum_{i=1}^{r} \sum_{\alpha \in I_i^m} h^i_\alpha(x) \, g^i_{\alpha}(t,p)
\end{eqnarray*}
are proper $p$-harmonic on $S$.
Furthermore, for any non-zero $(a,b) \in \cn^2$ the linear combination $a\phi_p+b\psi_p$ is proper $p$-harmonic on $S$.
\end{theorem}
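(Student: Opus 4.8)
The plan is to collapse the entire statement to a one-variable operator identity by passing to the logarithmic coordinate $y=\log t$ and exploiting the product structure of $\tau$ on functions assembled from the tension tree. First I would record the product rule. Splitting (\ref{equation-tau-formula}) as $\tau=\mathcal{D}_t+\mathcal{L}_t$, where $\mathcal{D}_t=t^2\partial_t^2+(1-n)\,t\,\partial_t$ acts only in $t$ and $\mathcal{L}_t$ collects the spatial terms (their coefficients depend on $t$, but no $\partial_t$ appears), one has for every node $h^i_\alpha$ (independent of $t$) and every $w=w(t)$ the identity $\tau(h^i_\alpha w)=h^i_\alpha\,\mathcal{D}_t w+w\,\tau(h^i_\alpha)$, and by definition of the tension tree $\tau(h^i_\alpha)=\sum_{k=1}^m h^{i+1}_{(\alpha,k)}\,t^{2\lambda_k}$. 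Writing $\phi_p=\sum_{i=0}^r\sum_{\alpha}h^i_\alpha f^i_\alpha$ with the convention that the root is $h^0=h$, $f^0=\log(t)^{p-1}$, and regrouping the image node by node, I obtain that $\tau$ acts on the tuple of $t$-functions through the linear operator $(Tw)^i_\alpha=\mathcal{D}_t w^i_\alpha+t^{2\lambda_{\alpha_i}}w^{i-1}_{\alpha'}$ (and $(Tw)^0=\mathcal{D}_t w^0$), where $\alpha'=(\alpha_1,\ldots,\alpha_{i-1})$; the hypothesis that the degree equals $r$ guarantees that the children of the bottom layer vanish. Hence $\tau^s\phi_p=\sum_{i,\alpha}h^i_\alpha\,(T^sf)^i_\alpha$, so it suffices to prove $T^pf\equiv 0$ and $T^{p-1}f\not\equiv0$ component-wise, and no linear independence of the $h^i_\alpha$ is needed.

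Next I would pass to $y=\log t$, $\partial=d/dy=t\,\partial_t$. Then $\mathcal{D}_t=\partial^2-n\partial=\partial(\partial-n)$, and multiplication by $t^\mu=e^{\mu y}$ conjugates $\partial$ into $\partial+\mu$, so that $\mathcal{D}_t(t^\mu g)=t^\mu(\mu+\partial)(\mu+\partial-n)g$. The decisive step is to recognise the explicit, rather baroque formula for $f^i_\alpha$ as the action of a rational function of $\partial$ on a single monomial. Writing $\mu_k=2\Lambda^k_\alpha$ and $d^k=\mu_k-n$, the generating identity $\sum_j c_i(j)\,x^j=\prod_{k=1}^i(d^k-x)^{-1}$ for the inner coefficients, together with a rearrangement that absorbs the powers of two and the sign $(-1)^{i+j}$ and uses $\partial^j y^{p-1}=\tfrac{(p-1)!}{(p-1-j)!}\,y^{p-1-j}$, yields
\[
f^i_\alpha(t,p)=\frac{(-1)^i\,t^{\mu_i}}{\prod_{k=1}^i\mu_k}\,\mathcal{R}_i\,y^{p-1},\qquad \mathcal{R}_i=\prod_{k=1}^i(d^k+2\partial)^{-1},
\]
where $\mathcal{R}_i$ is the terminating power series in $\partial$ (it acts on polynomials). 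This is precisely where the nonvanishing hypothesis $\prod_k(2\Lambda^k_\alpha-n)\neq0$ enters, ensuring $\mathcal{R}_i$ is defined.

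With this representation the whole computation telescopes. Using $\mathcal{R}_{i-1}=(d^i+2\partial)\mathcal{R}_i$ together with $2\lambda_{\alpha_i}=\mu_i-\mu_{i-1}$ and $d^i-\mu_i=-n$, the bracket $(\mu_i+\partial)(d^i+\partial)-\mu_i(d^i+2\partial)$ collapses to $\partial(\partial-n)$, giving
\[
(Tf)^i_\alpha=\frac{(-1)^i\,t^{\mu_i}}{\prod_{k=1}^i\mu_k}\,\mathcal{R}_i\,\partial(\partial-n)\,y^{p-1},
\]
since $\partial(\partial-n)$ commutes with $\mathcal{R}_i$. Thus $\tau$ acts on $\phi_p$ simply by replacing $y^{p-1}$ with $\partial(\partial-n)\,y^{p-1}$ \emph{uniformly over all nodes}, and iterating shows that $\tau^s\phi_p$ corresponds to $(\partial(\partial-n))^s y^{p-1}$. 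Because $\partial(\partial-n)$ lowers the degree of a polynomial by exactly one, $(\partial(\partial-n))^p y^{p-1}=0$ while $(\partial(\partial-n))^{p-1}y^{p-1}=(-n)^{p-1}(p-1)!\neq0$. This gives $\tau^p\phi_p=0$; and since $\mathcal{R}_i$ sends a constant $c$ to $c/\prod_k d^k$, the $t^0$-coefficient of $\tau^{p-1}\phi_p$ equals $(-n)^{p-1}(p-1)!\,h$, which is nonzero whenever $h\not\equiv0$, so $\phi_p$ is proper $p$-harmonic.

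The argument for $\psi_p$ is identical after replacing $t^{\mu_i}$ by $t^{\mu_i+n}$ and $d^k$ by $\tilde d^k=\mu_k+n$: the bracket now collapses to $\partial(\partial+n)$, giving $(\partial(\partial+n))^p y^{p-1}=0$ and leading constant $n^{p-1}(p-1)!$, and no positivity condition is required since $\tilde d^k>0$ automatically. Finally, for $(a,b)\neq(0,0)$ linearity gives $\tau^p(a\phi_p+b\psi_p)=0$, and properness follows by comparing lowest powers of $t$: $\tau^{p-1}\phi_p$ has nonzero $t^0$-coefficient $(-n)^{p-1}(p-1)!\,h$, whereas $\tau^{p-1}\psi_p$ is supported on powers $t^{\mu_i+n}$ with $\mu_i\ge0$, so its lowest power is $t^n$ with coefficient $n^{p-1}(p-1)!\,h$. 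Hence the $t^0$-coefficient of $\tau^{p-1}(a\phi_p+b\psi_p)$ is $a(-n)^{p-1}(p-1)!\,h$ and its $t^n$-coefficient contains $b\,n^{p-1}(p-1)!\,h$, so the combination is proper unless $a=b=0$. I expect the single real obstacle to be the middle step: recognising the explicit multi-index sum as $\mathcal{R}_i\,y^{p-1}$ via the generating function — once that reformulation is in hand, the conjugation $\mathcal{D}_t=\partial(\partial-n)$ and the telescoping $\mathcal{R}_{i-1}=(d^i+2\partial)\mathcal{R}_i$ make everything routine.
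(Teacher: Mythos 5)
Your proposal is correct, and it reaches the theorem by a genuinely different route than the paper. The shared skeleton is the product rule $\tau(h^i_\alpha w)=h^i_\alpha\,\mathcal{D}_t w+w\,\tau(h^i_\alpha)$ for $t$-independent nodes and $x$-independent $w$, together with the regrouping of $\tau(\phi_p)$ node by node over the tension tree. Where you diverge: the paper establishes the two-term recurrence $\tau(\phi_p)=n(1-p)\,\phi_{p-1}+(p-1)(p-2)\,\phi_{p-2}$ by differentiating $t^{2\Lambda^i_\alpha}\log(t)^{p-j-1}$ explicitly, shifting indices, and invoking the combinatorial Lemma~\ref{lem_prep}, then concludes by induction on $p$ from the base cases $\phi_1,\phi_2$; you instead pass to $y=\log t$, identify $f^i_\alpha$ (up to the constant $(-1)^i/\prod_k\mu_k$, with $\mu_k=2\Lambda^k_\alpha$, $d^k=\mu_k-n$) as $t^{\mu_i}\mathcal{R}_i\,y^{p-1}$ with $\mathcal{R}_i=\prod_k(d^k+2\partial)^{-1}$, and prove an intertwining relation that yields every iterate $\tau^s\phi_p$ in closed form. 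Your telescoping identity $\mathcal{R}_{i-1}=(d^i+2\partial)\mathcal{R}_i$ is precisely the operator form of Lemma~\ref{lem_prep}, so the combinatorics is repackaged rather than avoided, but the packaging pays off: properness of $\phi_p$ is read off from the constant term $(-n)^{p-1}(p-1)!\,h$ instead of being threaded through an induction, and the properness of $a\phi_p+b\psi_p$ --- which the paper dismisses as ``follows easily'' --- becomes an explicit comparison of $t^0$- and $t^n$-coefficients, where your use of the hypothesis $2\Lambda^i_\alpha\neq n$ is exactly what keeps the $\phi$-part from contaminating the $t^n$-coefficient. The paper's route, in exchange, is elementary and self-contained. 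Two points you leave implicit are easily supplied: the iteration $\tau^s\phi_p=\sum h^i_\alpha(T^sf)^i_\alpha$ needs your collapse computation for $y^{p-1}$ replaced by an arbitrary polynomial in $y$ (the bracket identity never uses the specific polynomial), and a vanishing node has vanishing descendants, so every term actually occurring in the regrouping has well-defined $f^i_\alpha$ and $f^{i-1}_{\alpha'}$ under the stated hypothesis.
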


\begin{remark}
Note that the assumption $\prod_{k=1}^i (2\Lambda^k_\alpha-n) \not= 0$ is needed to ensure that $\phi_p$ is well-defined.
On the other hand, $\psi_p$ is well-defined even without this assumption.
\end{remark}

\begin{remark}
Taking $h$ to be harmonic, we see from Theorem \ref{theorem-p-harmonic-main} that the function
\begin{equation*}
(t,x) \mapsto (a + b \, t^n) \log(t)^{p-1} \, h(x)
\end{equation*}
is proper $p$-harmonic.
A result of this form has already been obtained in \cite{Gud-15} for the case $S = \rn H^n$,
and in \cite{Gud-Sob-2} for the cases $S = \rn^m \ltimes \rn^n$ and $S = \rn^m \ltimes \mathrm H^{2n+1}$, where $\mathrm H^{2n+1}$ denotes the classical $(2n+1)$-dimensional Heisenberg group.
\end{remark}

Before proving Theorem\,\ref{theorem-p-harmonic-main}, we provide a preparatory lemma.

\begin{lemma}\label{lem_prep}
Let $i \geq 1$ and $j \geq 0$ be integers and let $a \in \cn^i$. Then	
\begin{equation*}
\sum_{\ell_1+\ldots+\ell_i = j} \, \prod_{k=1}^{i-1} a_k^{\ell_k+1} a_i^{\ell_i}
-
\sum_{\ell_1+\ldots+\ell_i+1 = j} \, \prod_{k=1}^i a_k^{\ell_k+1}
=
\sum_{\ell_1+\ldots+\ell_{i-1} = j} \, \prod_{k=1}^{i-1} a_k^{\ell_k+1}
\end{equation*}
\end{lemma}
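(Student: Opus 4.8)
The plan is to prove this purely combinatorially, treating the identity as a statement about sums over compositions. First I would clarify the index sets: the first sum on the left ranges over tuples $(\ell_1,\ldots,\ell_i)$ of non-negative integers with $\ell_1+\ldots+\ell_i = j$, the second over tuples $(\ell_1,\ldots,\ell_i)$ with $\ell_1+\ldots+\ell_i = j-1$ (reading the condition $\ell_1+\ldots+\ell_i+1=j$), and the right-hand side over tuples $(\ell_1,\ldots,\ell_{i-1})$ with $\ell_1+\ldots+\ell_{i-1}=j$. The natural strategy is to introduce the generating function in a single formal variable $x$, namely to recognize each sum as a coefficient extraction against the product of geometric-type series $\prod_k (a_k x)^{\ell_k}$.

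The key computation is the identity of formal power series
\begin{equation*}
\prod_{k=1}^{i-1} \frac{a_k}{1-a_k x} \cdot \frac{1}{1-a_i x}
-
\prod_{k=1}^{i} \frac{a_k}{1-a_k x} \cdot x
=
\prod_{k=1}^{i-1} \frac{a_k}{1-a_k x}.
\end{equation*}
Indeed, the first term on the left has $x^j$-coefficient equal to $\sum_{\ell_1+\ldots+\ell_i=j}\prod_{k=1}^{i-1}a_k^{\ell_k+1}a_i^{\ell_i}$, since the factor $a_k/(1-a_k x)=\sum_{\ell\ge 0} a_k^{\ell+1}x^\ell$ supplies the exponent $\ell_k+1$ on $a_k$ while $1/(1-a_i x)$ supplies $a_i^{\ell_i}$. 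The second term, after multiplying by $x$, has $x^j$-coefficient $\sum_{\ell_1+\ldots+\ell_i=j-1}\prod_{k=1}^i a_k^{\ell_k+1}$, matching the second sum on the left. Thus the whole left-hand side of the lemma is the $x^j$-coefficient of the left-hand side of the displayed series identity, and the right-hand side of the lemma is the $x^j$-coefficient of $\prod_{k=1}^{i-1} a_k/(1-a_k x)$.

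It therefore suffices to verify the series identity, which reduces to a single algebraic simplification. Factoring out the common product $\prod_{k=1}^{i-1} a_k/(1-a_k x)$, the claim becomes
\begin{equation*}
\frac{1}{1-a_i x} - \frac{a_i x}{1-a_i x} = 1,
\end{equation*}
which is immediate since $1 - a_i x = (1-a_i x)$ in the numerator after combining over the common denominator. Comparing $x^j$-coefficients then yields the lemma for every $j \ge 0$ simultaneously.

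The main obstacle is purely bookkeeping: one must be careful that the index condition $\ell_1+\ldots+\ell_i+1=j$ is read as a composition of $j-1$ (so that the sum is empty when $j=0$, consistent with the telescoping), and that the asymmetric roles of the first $i-1$ factors versus the last factor $a_i$ are tracked correctly when translating between the combinatorial sums and the generating function. An alternative to generating functions, if one prefers to avoid formal series, is a direct telescoping argument: splitting the first sum on the left according to whether $\ell_i = 0$ or $\ell_i \ge 1$, the $\ell_i\ge 1$ part reindexes (via $\ell_i \mapsto \ell_i - 1$) to cancel exactly against the second sum, leaving the $\ell_i=0$ part, which is precisely the right-hand side. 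Either route is elementary; the generating-function version makes the cancellation most transparent and is the one I would present.
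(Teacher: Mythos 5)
Your proposal is correct, and your primary argument takes a genuinely different route from the paper's. The paper proves the lemma by pure reindexing: it writes the first sum as $\sum_{\ell_i=0}^{j} a_i^{\ell_i}\sum_{\ell_1+\ldots+\ell_{i-1}=j-\ell_i}\prod_{k=1}^{i-1}a_k^{\ell_k+1}$, writes the second sum the same way and shifts $\ell_i+1\mapsto\ell_i$ so that it runs over $\ell_i=1,\ldots,j$, and subtracts; only the $\ell_i=0$ term survives, which is exactly the right-hand side. This is precisely the ``alternative'' telescoping argument you sketch in your final paragraph, so you have in effect reproduced the paper's proof as a fallback. Your main route via formal power series --- identifying each sum as the $x^j$-coefficient of $\prod_{k=1}^{i-1}\frac{a_k}{1-a_kx}\cdot\frac{1}{1-a_ix}$, of $x\prod_{k=1}^{i}\frac{a_k}{1-a_kx}$, and of $\prod_{k=1}^{i-1}\frac{a_k}{1-a_kx}$ respectively, and then reducing the identity to $\frac{1}{1-a_ix}-\frac{a_ix}{1-a_ix}=1$ --- is sound: the coefficient extractions are correct (including the degenerate case $i=1$, where the empty product gives the constant series $1$ and matches the paper's convention that the right-hand side is $\delta_{j0}$), and formal inversion of $1-a_kx$ is legitimate over $\cn$ with no nonvanishing hypotheses. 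What the generating-function version buys is transparency and uniformity: the identity is established for all $j\ge 0$ simultaneously from a single rational-function computation, and the asymmetric role of $a_i$ is visible in the factorization. What the paper's version buys is economy: it is entirely elementary, introduces no auxiliary formal variable, and slots directly into the proof of Theorem \ref{theorem-p-harmonic-main}, where the same shift $j\mapsto j-1$ and vanishing of boundary terms are being used anyway. Either proof is complete; only the bookkeeping caveat you already flagged (reading $\ell_1+\ldots+\ell_i+1=j$ as a composition of $j-1$, empty when $j=0$) needs to be stated explicitly in a final write-up.
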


\begin{proof}
Write
\begin{equation*}
\sum_{\ell_1+\ldots+\ell_i = j} \, \prod_{k=1}^{i-1} a_k^{\ell_k+1} a_i^{\ell_i}
=
\sum_{\ell_i = 0}^j a_i^{\ell_i} \sum_{\ell_1+\ldots+\ell_{i-1} = j-\ell_i} \, \prod_{k=1}^{i-1} a_k^{\ell_k+1}
\end{equation*}
and
\begin{eqnarray*}
\sum_{\ell_1+\ldots+\ell_i+1 = j} \, \prod_{k=1}^i a_k^{\ell_k+1}
&=&
\sum_{\ell_i = 0}^{j-1} a_i^{\ell_i+1} \sum_{\ell_1+\ldots+\ell_{i-1} = j-(\ell_i+1)} \,  \prod_{k=1}^{i-1} a_k^{\ell_k+1}\\
&=&
\sum_{\ell_i = 1}^{j} a_i^{\ell_i} \sum_{\ell_1+\ldots+\ell_{i-1} = j-\ell_i} \,  \prod_{k=1}^{i-1} a_k^{\ell_k+1},
\end{eqnarray*}
where we perform the variable change $\ell_i+1 \mapsto \ell_i$.
Subtracting the latter two, we see that only the term with $\ell_i = 0$ remains and the claim follows.
\end{proof}

\begin{proof}[Proof of Theorem \ref{theorem-p-harmonic-main}]
Let us first focus on the function $\phi_p$.
A direct calculation shows that $\phi_1$ is proper harmonic and $\phi_2$ is proper biharmonic.
The result then follows immediately by induction from the fact that
\begin{equation*}
\tau(\phi_p) = -n(p-1)\phi_{p-1} + (p-1)(p-2)\phi_{p-2},
\end{equation*}
so it only remains to prove this identity.
We begin by calculating
\begin{eqnarray}\label{eq-solution-generalized-identity-proof-0}
&& \tau(h(x)\log(t)^{p-1}) \\[0.2cm]
&=&
\sum_{k=1}^m h^1_k(x) \,t^{2\lambda_k}\log(t)^{p-1} + h(x)\left(n(1-p)\log(t)^{p-2}
+(p-1)(p-2)\log(t)^{p-3}\right). \nonumber
\end{eqnarray}
Now
\begin{equation}\label{eq-solution-generalized-identity-proof-1}
\tau(h^i_\alpha(x) \, f^i_{\alpha}(t,p))
=
\sum_{k=1}^m h^{i+1}_{(\alpha,k)}(x) \, t^{2\lambda_k} f^i_\alpha(t,p)
+
h^i_\alpha(x) \, \tau(f^i_{\alpha}(t,p)),
\end{equation}
so we wish to calculate
\begin{equation*}
\tau(f^i_{\alpha}(t,p))
=
\frac{1}{\prod_{k=1}^i \Lambda^k_\alpha} \sum_{j=0}^{p-1}
\sum_{\ell_1 + \ldots + \ell_i = j}
\frac{(-1)^{i+j} \,2^{j-i} \prod_{k=1}^j (p-k)}{\prod_{k=1}^i \left(2\Lambda^k_\alpha-n\right)^{\ell_k+1}}  \tau(t^{2\Lambda^i_\alpha}\log(t)^{p-j-1}).
\end{equation*}
We have
\begin{eqnarray*}
&&\tau(t^{2\Lambda_\alpha^i}\log(t)^{p-j-1})\\[0.2cm]
&=& t^{2\Lambda_\alpha^i} \Big( 2\Lambda_\alpha^i(2\Lambda_\alpha^i-n)\log(t)^{p-j-1} + 4\Lambda_\alpha^i(p-j-1)\log(t)^{p-j-2} \\
&& \quad - n(p-j-1)\log(t)^{p-j-2} + (p-j-1)(p-j-2)\log(t)^{p-j-3} \Big).
\end{eqnarray*}
Hence,
\begin{eqnarray}\label{eq-solution-generalized-identity-proof-2}
&&\tau(f^i_\alpha(t,p)) \nonumber\\
&=&
\frac{t^{2\Lambda_\alpha^i}}{\prod_{k=1}^{i-1} \Lambda_\alpha^k} \sum_{j=0}^{p-1}\; \sum_{\ell_1+\ldots+\ell_i = j} \frac{(-1)^{i+j} \,2^{j-i+1} \prod_{k=1}^j(p-k)}{\prod_{k=1}^{i-1}(2\Lambda_\alpha^k-n)^{\ell_k+1} \, (2\Lambda_\alpha^i-n)^{\ell_i}} \,\log(t)^{p-j-1} \nonumber\\[0.1cm]
&+& \frac{t^{2\Lambda_\alpha^i}}{\prod_{k=1}^{i-1} \Lambda_\alpha^k} \sum_{j=0}^{p-1}\; \sum_{\ell_1+\ldots+\ell_i = j} \frac{(-1)^{i+j} \,2^{j-i+2} \prod_{k=1}^{j+1}(p-k)}{\prod_{k=1}^i(2\Lambda_\alpha^k-n)^{\ell_k+1}} \,\log(t)^{p-j-2} \nonumber\\[0.1cm]
&-& \frac{nt^{2\Lambda_\alpha^i}}{\prod_{k=1}^i \Lambda_\alpha^k} \sum_{j=0}^{p-1}\; \sum_{\ell_1+\ldots+\ell_i = j} \frac{(-1)^{i+j} \, 2^{j-i} \prod_{k=1}^{j+1}(p-k)}{\prod_{k=1}^i(2\Lambda_\alpha^k-n)^{\ell_k+1}} \,\log(t)^{p-j-2} \nonumber\\[0.1cm]
&+& \frac{t^{2\Lambda_\alpha^i}}{\prod_{k=1}^i \Lambda_\alpha^k} \sum_{j=0}^{p-1}\; \sum_{\ell_1+\ldots+\ell_i = j} \frac{(-1)^{i+j} \, 2^{j-i} \prod_{k=1}^{j+2}(p-k)}{\prod_{k=1}^i(2\Lambda_\alpha^k-n)^{\ell_k+1}} \,\log(t)^{p-j-3}.
\end{eqnarray}
Note that the second term in (\ref{eq-solution-generalized-identity-proof-2}) can be rewritten as
\begin{eqnarray*}
&& \frac{t^{2\Lambda_\alpha^i}}{\prod_{k=1}^{i-1} \Lambda_\alpha^k} \sum_{j=0}^{p-1}\; \sum_{\ell_1+\ldots+\ell_i = j} \frac{(-1)^{i+j} \,2^{j-i+2} \prod_{k=1}^{j+1}(p-k)}{\prod_{k=1}^i(2\Lambda_\alpha^k-n)^{\ell_k+1}} \,\log(t)^{p-j-2} \\[0.1cm]
&=&
-\frac{t^{2\Lambda_\alpha^i}}{\prod_{k=1}^{i-1} \Lambda_\alpha^k} \sum_{j=1}^{p-1}\; \sum_{\ell_1+\ldots+\ell_i+1 = j} \frac{(-1)^{i+j} \,2^{j-i+1} \prod_{k=1}^{j}(p-k)}{\prod_{k=1}^i(2\Lambda_\alpha^k-n)^{\ell_k+1}} \,\log(t)^{p-j-1}
\end{eqnarray*}
as seen by making the variable change $j \mapsto j-1$ and using the fact that the term corresponding to $j = p-1$ in the left-hand side is zero.
From Lemma\,\ref{lem_prep} we have
\begin{eqnarray*}
&& \sum_{\ell_1+\ldots+\ell_i = j} \frac{1}{\prod_{k=1}^{i-1}(2\Lambda_\alpha^k-n)^{\ell_k+1} \, (2\Lambda_\alpha^i-n)^{\ell_i}}
-
\sum_{\ell_1+\ldots+\ell_i+1 = j} \frac{1}{\prod_{k=1}^{i}(2\Lambda_\alpha^k-n)^{\ell_k+1}}\\[0.1cm]
&=& \sum_{\ell_1+\ldots+\ell_{i-1} = j} \frac{1}{\prod_{k=1}^{i-1}(2\Lambda_\alpha^k-n)^{\ell_k+1}},
\end{eqnarray*}
where it is understood that if $i = 1$, then the summation is 0 unless $j=0$, in which case the sum is 1.
This implies that the first two terms in (\ref{eq-solution-generalized-identity-proof-2}) contribute the term
\begin{equation*}
\frac{t^{2\Lambda_\alpha^i}}{\prod_{k=1}^{i-1} \Lambda_\alpha^k} \sum_{j=0}^{p-1}\; \sum_{\ell_1+\ldots+\ell_{i-1} = j} \frac{(-1)^{i+j} \,2^{j-i+1} \prod_{k=1}^j(p-k)}{\prod_{k=1}^{i-1}(2\Lambda_\alpha^k-n)^{\ell_k+1}} \,\log(t)^{p-j-1}
=
-t^{2\lambda_{\alpha_i}} f^{i-1}_{\alpha'}(t,p),
\end{equation*}
where $\alpha = (\alpha',\alpha_i)$ and it is understood that
\begin{equation*}
f^{0}_{\alpha'}(t,p) = \log(t)^{p-1}.
\end{equation*}
The third term in (\ref{eq-solution-generalized-identity-proof-2}) can be rewritten as
\begin{equation*}
-\frac{nt^{2\Lambda_\alpha^i}}{\prod_{k=1}^i \Lambda_\alpha^k} \sum_{j=0}^{p-1}\; \sum_{\ell_1+\ldots+\ell_i = j} \frac{(-1)^{i+j} \, 2^{j-i} \prod_{k=1}^{j+1}(p-k)}{\prod_{k=1}^i(2\Lambda_\alpha^k-n)^{\ell_k+1}} \,\log(t)^{p-j-2}
=
n (1-p) f^i_\alpha(t,p-1),
\end{equation*}
as seen by performing the variable change $k \mapsto k+1$ in the product in the numerator and noticing that the term corresponding to $j=p-1$ in the sum is zero.
One similarly simplifies the fourth term in (\ref{eq-solution-generalized-identity-proof-2}) and obtains
\begin{equation*}
\tau(f^i_\alpha(t,p))  = -t^{2\lambda_{\alpha_i}} f^{i-1}_{\alpha'}(t,p) + n (1-p)f^i_\alpha(t,p-1) + (p-1)(p-2) f^i_\alpha(t,p-2).
\end{equation*}
Combining this identity with (\ref{eq-solution-generalized-identity-proof-0}) and (\ref{eq-solution-generalized-identity-proof-1}), we get
\begin{eqnarray*}
&& \tau(\phi_p)\\[0.2cm]
&=&
\sum_{k=1}^m h^1_k(x) \,t^{2\lambda_k}\log(t)^{p-1}
+ h(x)\left(n(1-p)\log(t)^{p-2}
+(p-1)(p-2)\log(t)^{p-3}\right)\\[0.1cm]
&+& \sum_{i=1}^{r} \sum_{\alpha \in I_i^m} \left\{ \sum_{k=1}^m h^{i+1}_{(\alpha,k)}(x) \, t^{2\lambda_k} f^i_\alpha(t,p) \right.\\[0.2cm]
&& \quad \left. +
h^i_\alpha(x) \Big( -t^{2\lambda_{\alpha_i}} f^{i-1}_{\alpha'}(t,p) + n (1-p)f^i_\alpha(t,p-1) + (p-1)(p-2) f^i_\alpha(t,p-2) \Big) \right\}\\[0.2cm]
&=&
n(1-p)\phi_{p-1} + (p-1)(p-2)\phi_{p-2} +  \sum_{k=1}^m h^1_k(x) \,t^{2\lambda_k}\log(t)^{p-1}\\[0.2cm]
&+& \sum_{i=1}^{r-1} \sum_{\alpha\in I_i^m} \sum_{k=1}^m h^{i+1}_{(\alpha,k)}(x) \, t^{2\lambda_k} f^i_\alpha(t,p)
- \sum_{i=1}^{r}  \sum_{\alpha\in I_i^m} h^i_\alpha(x) \, t^{2\lambda_{\alpha_i}} f^{i-1}_{\alpha'}(t,p)\\[0.2cm]
&=& n(1-p)\phi_{p-2} + (p-1)(p-2)\phi_{p-3},
\end{eqnarray*}
where we employ the fact that $h^{r+1}_\alpha \equiv 0$ as well as
\begin{eqnarray*}
&& \sum_{i=1}^{r}  \sum_{\alpha\in I_i^m} h^i_\alpha(x) \, t^{2\lambda_{\alpha_i}} f^{i-1}_{\alpha'}(t,p)\\[0.2cm]
&=&
\sum_{\alpha \in I_1^m} h^1_{\alpha}(x) \, t^{2\lambda_{\alpha_1}} f^{0}_{\alpha'}(t,p)
+
\sum_{i=2}^{r}  \sum_{\alpha'\in I_{i-1}^m}\sum_{\alpha_i =1}^m h^{i}_{(\alpha',\alpha_i)}(x) \, t^{2\lambda_{\alpha_i}} f^{i-1}_{\alpha'}(t,p)\\[0.2cm]
&=&
\sum_{k=1}^m h^1_k(x) \, t^{2\lambda_k} \log(t)^{p-1}
+
\sum_{i=1}^{r-1}\sum_{\alpha \in I_i^m} \sum_{k=1}^m h^{i+1}_{(\alpha,k)} \, t^{2\lambda_k} f^i_\alpha(t,p).
\end{eqnarray*}
This finishes the proof of the proper $p$-harmonicity of $\phi_p$.

To prove that $\psi_p$ is proper $p$-harmonic, one performs a similar argument to show that
\begin{equation*}
\tau(\psi_p) = n(p-1)\psi_{p-1} + (p-1)(p-2)\psi_{p-2}.
\end{equation*}
Finally, the fact that $a\phi_p+b\psi_p$ is proper $p$-harmonic for $(a,b) \in \cn^2\setminus\{0\}$ follows easily from the identities for $\tau(\phi_p)$ and $\tau(\psi_p)$.
\end{proof}

With Theorem \ref{theorem-p-harmonic-main} at hand, we now switch our focus to finding examples of functions $h$ independent of $t$ whose tension tree is of finite degree.
We have already seen that certain polynomials on the hyperbolic planes $\rn H^2$ and $\cn H^2$ have tension trees of finite degree.
This is no coincidence, and in fact we have this general result.

\begin{theorem}\label{theorem-polynomial-tension-tree}
Let $S$ be a rank-one Lie group of Iwasawa type.
Then the tension tree of any polynomial independent of $t$ is of finite degree.
\end{theorem}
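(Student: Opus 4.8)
The plan is to equip the polynomial ring in the variables $x^i_j$ with the \emph{weighted degree} in which each variable $x^i_j$ is assigned the weight $\lambda_i$; concretely, a monomial $\prod_{i,j}(x^i_j)^{a^i_j}$ is given weighted degree $\sum_{i,j} a^i_j \lambda_i$, and for a nonzero polynomial $P$ I write $D(P)$ for the largest weighted degree of a monomial occurring in it, with $D(0) = -\infty$. The first step is to record that, by (\ref{equation-tau-formula}) together with the uniqueness of the expansion $\tau(h) = \sum_k h^1_k(x)\, t^{2\lambda_k}$ — which holds because $0 < \lambda_1 < \ldots < \lambda_m$ are distinct, so the powers $t^{2\lambda_k}$ are distinct — every node of the tension tree is obtained by iterating the polynomial-coefficient differential operators
\begin{equation*}
\L_i(h) = \sum_{k,\alpha}\sum_{j,\ell,\beta} P^{ik}_{j\ell}(x)\,\frac{\partial}{\partial x^k_\ell}\Big(P^{i\alpha}_{j\beta}(x)\,\frac{\partial h}{\partial x^\alpha_\beta}\Big),
\end{equation*}
namely $h^1_k = \L_k(h)$ and, more generally, $h^i_\alpha = \L_{\alpha_i}\circ\cdots\circ\L_{\alpha_1}(h)$. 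Since the $P^{ik}_{j\ell}$ are polynomials and differentiation and multiplication preserve the polynomial ring, every node is again a polynomial, so it suffices to control the weighted degree under the operators $\L_i$.

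The key step is the claim that each structure polynomial $P^{i\alpha}_{j\beta}$ is weighted-homogeneous of weighted degree $\lambda_\alpha - \lambda_i$. For $i \geq \alpha$ this is immediate from (\ref{eq-structure-polynomials}), since then $P^{i\alpha}_{j\beta} = \delta_{i\alpha}\delta_{j\beta}$ is constant (of weighted degree $0 = \lambda_\alpha - \lambda_i$ when $i=\alpha$). For $i < \alpha$ one argues by induction on $r$ using the recursion defining $p^{i\alpha}_{j\beta}(x,r)$. The linear polynomial $p^{i\alpha}_{j\beta}(x,1)$ only involves variables $x^k_\ell$ whose coefficient $A^{ki\alpha}_{\ell j\beta}$ is nonzero, and — by the very argument that produced (\ref{eq-structure-coefficients-zero}), using that $\ad(A)|_\n$ is a derivation — such a coefficient vanishes unless $\lambda_k + \lambda_i = \lambda_\alpha$, so the variable $x^k_\ell$ occurring there has weight $\lambda_k = \lambda_\alpha - \lambda_i$. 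Hence $p^{i\alpha}_{j\beta}(x,1)$ is weighted-homogeneous of weighted degree $\lambda_\alpha - \lambda_i$, and the product rule in the recursion $p^{i\alpha}_{j\beta}(x,r) = \sum p^{ik}_{j\ell}(x,r-1)\, p^{k\alpha}_{\ell\beta}(x,1)$ telescopes: a factor of weighted degree $\lambda_k - \lambda_i$ times one of weighted degree $\lambda_\alpha - \lambda_k$ has weighted degree $\lambda_\alpha - \lambda_i$. Thus every $p^{i\alpha}_{j\beta}(x,r)$, and therefore $P^{i\alpha}_{j\beta}$, is weighted-homogeneous of weighted degree $\lambda_\alpha - \lambda_i$.

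Granting this, a direct bookkeeping shows that $\L_i$ lowers weighted degree by exactly $2\lambda_i$: in a single summand, $\partial/\partial x^\alpha_\beta$ drops the weighted degree by $\lambda_\alpha$, multiplication by $P^{i\alpha}_{j\beta}$ raises it by $\lambda_\alpha - \lambda_i$, $\partial/\partial x^k_\ell$ drops it by $\lambda_k$, and multiplication by $P^{ik}_{j\ell}$ raises it by $\lambda_k - \lambda_i$, for a net change of $-2\lambda_i$. Hence $\L_i$ maps the weighted-degree-$d$ graded component into the weighted-degree-$(d-2\lambda_i)$ component, so $D(\L_i(h)) \leq D(h) - 2\lambda_i \leq D(h) - 2\lambda_1$. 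Iterating, the node $h^i_\alpha$ satisfies $D(h^i_\alpha) \leq D(h) - 2\Lambda^i_\alpha \leq D(h) - 2i\lambda_1$. Since every nonzero polynomial has nonnegative weighted degree, as soon as $i > D(h)/(2\lambda_1)$ we must have $h^i_\alpha \equiv 0$ for all $\alpha \in I_i^m$, so the tension tree has finite degree. (One may run this directly with the constant $D(h)$, or first split $h$ into its finitely many weighted-homogeneous components and invoke the linearity of the tension tree in $h$.)

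The main obstacle is precisely the homogeneity claim for the $P^{i\alpha}_{j\beta}$ in the second step; everything else is linear-algebra bookkeeping. The whole argument hinges on upgrading (\ref{eq-structure-coefficients-zero}) to the sharp statement $A^{ki\alpha}_{\ell j\beta} \neq 0 \Rightarrow \lambda_k + \lambda_i = \lambda_\alpha$, which is the eigenvalue-additivity coming from $\ad(A)|_\n$ being a derivation. Once the operators $\L_i$ are known to be weighted-homogeneous of negative degree $-2\lambda_i$, the strict decrease of the weighted degree at every step makes termination of the tree automatic.
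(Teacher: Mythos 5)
Your proof is correct, and it takes a genuinely different route from the paper's. The paper argues by a double induction: an outer induction on the number $\gamma$ of eigenvalue blocks, and an inner descent on the ordinary degree in the $x^\gamma$-variables; its key input is the qualitative observation that when $\tau$ and the conformality operator $\kappa$ are applied to functions of $x^1,\ldots,x^\gamma$ only, the resulting coefficient polynomials depend only on $x^1,\ldots,x^{\gamma-1}$, so the $x^\gamma$-degree never increases and strictly drops in the terms produced by the product rule; finiteness then follows by applying the induction hypothesis to the coefficient polynomials in $x'=(x^1,\ldots,x^{\gamma-1})$. You instead upgrade \eqref{eq-structure-coefficients-zero} to the sharp eigenvalue additivity $A^{ki\alpha}_{\ell j\beta}\neq 0 \Rightarrow \lambda_k+\lambda_i=\lambda_\alpha$ --- which is indeed valid, by the same derivation argument combined with the orthogonality of distinct eigenspaces of the symmetric operator $\ad(A)|_\n$ --- then deduce that each structure polynomial $P^{i\alpha}_{j\beta}$ is weighted-homogeneous of weighted degree $\lambda_\alpha-\lambda_i$, so that each block operator $\L_i$ is weighted-homogeneous of strictly negative degree $-2\lambda_i$. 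This single grading argument replaces the paper's nested induction and buys more: it is quantitative (a branch $\alpha$ dies as soon as $2\Lambda^i_\alpha > D(h)$, so the tree has degree at most $D(h)/(2\lambda_1)$), and it exposes a structural fact invisible in the paper's treatment, namely that the $x$-part of the Laplace--Beltrami operator respects the weighted grading determined by the eigenvalues. The paper's approach, by contrast, needs only the coarse vanishing \eqref{eq-structure-coefficients-zero} and no homogeneity of the $P^{i\alpha}_{j\beta}$, at the cost of a longer and less sharp argument. Your identification of the nodes as $h^i_\alpha=\L_{\alpha_i}\circ\cdots\circ\L_{\alpha_1}(h)$, justified by the linear independence of the distinct powers $t^{2\lambda_k}$, is also sound, so there is no gap.
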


Before we start with the proof of Theorem \ref{theorem-polynomial-tension-tree}, we make some crucial observations.
\begin{enumerate}
\item[(i)]
If $h,u:S\to\cn$ depend only on the $x^1,\ldots,x^\gamma$-variables for $1\leq\gamma\leq m$, then the tension field $\tau$ and the conformality operator\footnote{For two complex-valued functions $f,h:(M,g)\to\cn$ we have the
following well-known relation
$\label{equation-basic}
\tau(fh)=\tau(f)\,h+2\,\kappa(f,h)+f\,\tau(h)$,
where the {\it conformality} operator $\kappa$ is given by
$
\kappa(f,h)=g(\nabla f,\nabla h).
$} $\kappa$ satisfy
\begin{equation*}
\tau(h)
= \sum_{i=1}^\gamma t^{2\lambda_i} \left(
\sum_{k,\alpha=1}^\gamma \sum_{\ell,\beta=1}^{n_k,n_\alpha}
R^{ik\alpha}_{\ell\beta}(x) \frac{\partial^2h}{\partial x^k_\ell \partial x^\alpha_\beta}
+\sum_{\alpha=1}^\gamma \sum_{\beta=1}^{n_\alpha} Q^{i\alpha}_{\beta}(x)\frac{\partial h}{\partial x^\alpha_\beta} \right),\\[0.1cm]
\end{equation*}
\begin{equation*}
\kappa(h,u)
=
\sum_{i=1}^\gamma t^{2\lambda_i}
\left( \sum_{k,\alpha=1}^\gamma \sum_{\ell,\beta=1}^{n_k,n_\alpha}
R^{ik\alpha}_{\ell\beta}(x) \frac{\partial h}{\partial x^k_\ell}\frac{\partial u}{x^\alpha_\beta} \right),
\end{equation*}
where $R^{ik\alpha}_{\ell\beta}(x)$ and $Q^{i\alpha}_\beta(x)$ are polynomials depending only on the $x^1,\ldots,x^{\gamma-1}$-variables.

\vskip0.2cm

\item[(ii)] If $h,u : S \to\cn$ are independent of $t$, then the tension tree of their sum is the sum of their tension trees i.e.\
\begin{equation*}
(h+u)^i_\alpha = h^i_\alpha + u^i_\alpha, \quad i \in \nn, \ \alpha \in I_i^m.
\end{equation*}
In particular, if $h$ and $u$ are of finite degree $r$, then so is $h+u$.
\end{enumerate}

\begin{proof}[Proof of Theorem \ref{theorem-polynomial-tension-tree}]
The proof is by induction.
First, if $h:S\to\cn$ depends only on $x^1$, then by Lemma \ref{lemma-tau-x^1-x^2} (i) the tension tree of $h$ is given by
\begin{equation*}
h^i_{(1,1,\ldots,1)}(x^1) = \Delta_{x^1}^i h,
\end{equation*}
which is zero for large enough $i$ if $h$ is assumed to be a polynomial.
Consequently, the tension tree is of finite degree in this case.
Now for the induction step, suppose that the tension tree of any polynomial depending only on the $x^1,\ldots,x^{\gamma-1}$-variables is of finite degree and let $h$ be an arbitrary polynomial in the $x^1,\ldots,x^{\gamma}$ variables.
Keeping track only of the degree in the $x^\gamma$-variables, we can write such a polynomial as
\begin{equation*}
h(x) = \sum_{\beta'} \sum_{|\beta_\gamma| \leq N} c_\beta \cdot (x')^{\beta'} \cdot (x^\gamma)^{\beta_\gamma},
\end{equation*}
where we use the multi-multiindex notations
\begin{equation*}
\beta = (\beta',\beta_\gamma) = (\beta_1,\ldots, \beta_\gamma) \in \nn^{n_1}\times\ldots\times\nn^{n_\gamma},
\end{equation*}
\begin{equation*}
x' = (x^1,\ldots, x^{\gamma-1}), \qquad (x')^{\beta'} = \prod_{k=1}^{\gamma-1} (x^k)^{\beta_k}.
\end{equation*}
Let $(u_{\beta'})_\alpha^i$ be the tension tree of $u_{\beta'}(x') = (x')^{\beta'}$, which is of finite degree by the induction hypothesis.
By the product rule
\begin{equation*}
\tau(h) = \sum_{\beta'} \sum_{|\beta_\gamma| \leq N} c_\beta \left(
\tau[(x')^{\beta'}] \cdot (x^\gamma)^{\beta_\gamma} + 2\kappa[(x')^{\beta'},(x^\gamma)^{\beta_\gamma}]
+ (x')^{\beta'}\cdot \tau[(x^\gamma)^{\beta_\gamma}] \right).
\end{equation*}
By (i) of the discussion preceding the proof we see that, in the last two terms, the degree in the $x^\gamma$-variables decreases by at least 1, while the degree in the other variables may increase.
Then (ii) of the discussion preceding the proof implies that the first level of the tension tree of $h$ satisfies
\begin{equation*}
h^1_k(x) = \sum_{\beta'} \sum_{|\beta_\gamma| \leq N} c_\beta \cdot (u_{\beta'})^1_{k}(x') \cdot (x^\gamma)^{\beta_\gamma}
+ \sum_{\beta'} \sum_{|\beta_\gamma| \leq N-1} (c_\beta)^1_k \cdot (x')^{\beta'} \cdot (x^\gamma)^{\beta_\gamma}
\end{equation*}
for appropriately chosen coefficients $(c_\beta)^1_k$.
Iterating this argument, we get
\begin{equation*}
h^i_\alpha(x) =  \sum_{\beta'} \sum_{|\beta_\gamma| \leq N} c_\beta \cdot (u_{\beta'})^i_{\alpha}(x') \cdot (x^\gamma)^{\beta_\gamma} 
+
\sum_{\beta'} \sum_{|\beta_\gamma| \leq N-1} (c_\beta)^i_{\alpha} \cdot (x')^{\beta'} \cdot (x^\gamma)^{\beta_\gamma},
\end{equation*}
for $i \in \nn, \,\alpha \in I_i^m$, and appropriately chosen coefficients $(c_\beta)^i_{\alpha}$.
But since the tension tree of $u_{\beta'}$ is of finite degree, the first term eventually vanishes i.e.\ there is a positive integer $r$ such that $(u_{\beta'})^r_\alpha = 0$, so that
\begin{equation*}
h^r_\alpha(x)
=
\sum_{\beta'} \sum_{|\beta_\gamma| \leq N-1} (c_\beta)^r_{\alpha} \cdot (x')^{\beta'} \cdot (x^\gamma)^{\beta_\gamma},
\quad
\alpha \in I_r^m.
\end{equation*}
Thus, the degree in the $x^\gamma$-variables eventually decreases.
Applying the same argument to each node $h^r_\alpha$ (at most) $N-1$ times, we see that, eventually, we will completely lose dependence of $x^\gamma$ i.e.\ there is a positive integer $s$ such that
\begin{equation*}
h^s_\alpha(x) = \sum_{\beta'} (c_{\beta'})^s_{\alpha} \cdot (x')^{\beta'}, \quad \alpha \in I_s^m.
\end{equation*}
Finally, each node $h^s_\alpha$ is a polynomial depending only on the $x^1,\ldots,x^{\gamma-1}$ variables
and hence has tension tree of finite degree by the induction hypothesis.
This implies that the tension tree of $h$ has finite degree as well.
\end{proof}

A combination of Theorems \ref{theorem-p-harmonic-main} and \ref{theorem-polynomial-tension-tree} generates a plethora of explicit examples of proper $p$-harmonic functions on an arbitrary rank-one Lie group of Iwasawa type.
Nevertheless, it is natural to ask whether there exist non-polynomial functions independent of $t$ whose tension tree is of finite degree. This question is answered to some extend in the following example.

\begin{example}\label{example-h-tree-single-branch}
Let $S$ be a rank-one Lie group of Iwasawa type, and let $r \in \nn$ be fixed.

\begin{enumerate}
\item[(i)]
Let $H : \rn^{n_1} \to \cn$ be proper $(r+1)$-harmonic on its flat Euclidean domain and
define $h:S\to\cn$ by
\begin{equation*}
h(x) = H(x^1).
\end{equation*}
Then by Lemma \ref{lemma-tau-x^1-x^2} (i), the tension tree of $h$ has a single branch with non-zero nodes
\begin{equation*}
h^i_{(1,1,\ldots,1)}(x) = \Delta^i H(x^1), \quad i= 1,\ldots, r,
\end{equation*}
and the tree has degree $r$.

\vskip0.2cm

\item[(ii)]
Let $H : \rn^{n_1} \setminus \{0\} \to \cn$ be the radial proper $(r+1)$-harmonic function
\begin{equation*}
H(x^1) =
\begin{cases}
\sum_{k=0}^{r} \Vert x^1\Vert^{2k} (a_k \log \Vert x^1\Vert + b_k), & n_1 = 2 \\[0.2cm]
\sum_{k=0}^{r} \Vert x^1\Vert^{2k} (a_k \Vert x^1\Vert^{2-n_1} + b_k), & n_1 \not= 2,
\end{cases}
\end{equation*}
where $a,b \in \cn^{r+1}$ are non-zero,
and define the function $G : \rn^{n_2} \to \cn$ by
\begin{equation*}
G(x^2) = c_0 + \sum_{j=1}^{n_2} c_j\,x^2_j
\end{equation*}
for a non-zero $c \in \cn^{n_2+1}$.
Define $h:S\to\cn$ by
\begin{equation*}
h(x) = H(x^1)\,G(x^2).
\end{equation*}
Then by Lemma \ref{lemma-tau-x^1-x^2} (ii), the tension tree of $h$ has a single branch and the non-zero nodes are
\begin{equation*}
h^i_{(1,1,\ldots,1)}(x) = \Delta^iH(x^1)\,G(x^2), \quad i = 1,\ldots,r,
\end{equation*}
and the tree has degree $r$.
\end{enumerate}
\end{example}

In particular, if $n_1\geq 2$ then the examples above generate non-polynomial functions whose tension tree is of finite degree. These examples of functions only depend on the $x^1$- and the $x^2$-variables,
which seems to be the best that one can obtain in full generality.
However, one can easily construct Lie groups for which there exist non-polynomial functions whose tension tree is of finite degree and which depend on the $x^i$-variables with $i > 2$.


\subsection{$p$-Harmonic Functions on Symmetric Spaces of Compact Type}\label{section-duality}
It is a well-known consequence of the Baker-Campbell-Hausdorff formula that every Lie group is a real-analytic manifold.  The solutions constructed in this work are all real-analytic, in particular, those from the Riemannian symmetric spaces $\rn H^n$, $\cn H^n$, $\hn H^n$ and $\Oct H^2$ of non-compact type.

Now present one of those as the non-compact quotient $G/K$ where $G$ is the connected component of the isometry group containing the neutral element and $K$ the isotropy group.  Then we have the Cartan decomposition $\g=\k\oplus\m$ of the Lie algebra $\g$ of $G$ and $\m$ its orthogonal complement.  Let $f:G/K\to\cn$ be one of our global proper $p$-harmonic solutions and $\hat f=f\circ\pi:G\to\cn$ be the $K$-invariant composition of $f$ with the natural projection $\pi:G\to G/K$.  Then extend $\hat f$ to a holomorphic function $\hat f^*:W^*\to\cn$ locally defined on the complexification $G^\cn$ of $G$.  The complex Lie group $G^\cn$ contains the compact subgroup $U$ with Lie algebra $\un=\k\oplus i\m$.  Let $\hat f^*:W^*\cap U\to\cn$ be the $K$-invariant restriction of $\hat f^*$ to $W^*\cap U$.  Then this induces a function $f^*:\pi^*(W^*\cap U)\to\cn$ locally defined of the symmetric space $U/K$ which is the compact companion of $G/K$.  Here $\pi^*:U\to U/K$ is the corresponding natural projection.

\begin{theorem}\label{theorem-duality}
Let $f:G/K\to\cn$ be a complex-valued proper $p$-harmonic function, defined on the non-compact $G/K$, as above.  Then its dual function $f^*:\pi^*(W^*\cap U)\to\cn$, locally defined on the compact $U/K$, is proper $p$-harmonic.
\end{theorem}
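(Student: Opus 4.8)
The plan is to exploit the standard duality between a symmetric space $G/K$ of non-compact type and its compact companion $U/K$, which at the level of the Lie algebras amounts to the substitution $\m \mapsto i\m$, i.e.\ a sign flip on the curvature and on the relevant second-order differential operators. The key observation is that $p$-harmonicity is a purely local, analytic condition: since all our solutions are real-analytic by the remarks preceding the statement, it suffices to show that the Laplace-Beltrami operator on $U/K$ is, under the correspondence $\hat f \mapsto \hat f^*$, obtained from the one on $G/K$ by a precise algebraic transformation that intertwines $p$-harmonicity.

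First I would set up the dictionary carefully. Writing $\tau$ for the Laplace-Beltrami operator on $G/K$ and $\tau^*$ for the one on $U/K$, I would express both as $K$-invariant operators pulled back to $G$ and $U$ respectively via the projections $\pi, \pi^*$. Using the Cartan decomposition $\g = \k \oplus \m$ and the compact dual $\un = \k \oplus i\m$, the second-order part of the Casimir/Laplace operator is built from an orthonormal basis of $\m$; replacing each such basis vector $X$ by $iX$ changes the corresponding second derivatives by a factor $-1$ in the directions spanned by $\m$. The content of the duality is that $\tau^* \hat f^* = -(\tau \hat f)^*$ on the $\m$-directions, with the $\k$-directions playing no role after $K$-invariance is imposed. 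The holomorphic extension $\hat f^*$ to $G^\cn$ is the device that makes this substitution rigorous: because $\hat f^*$ is holomorphic on the neighbourhood $W^*$ in $G^\cn$, differentiating along $iX \in i\m \subset \un$ is literally $i$ times differentiating along $X$, so each application of $\tau^*$ reproduces the corresponding application of $\tau$ up to an overall sign.

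The crucial step is then the iteration. Once I have established the intertwining relation
\begin{equation*}
\tau^*(\hat f^*) = -\,(\tau \hat f)^*
\end{equation*}
on $W^* \cap U$ after restriction and descent to $U/K$, i.e.\ $\tau^* f^* = -(\tau f)^*$, an immediate induction gives $(\tau^*)^p f^* = (-1)^p (\tau^p f)^*$. Since $f$ is $p$-harmonic, $\tau^p f \equiv 0$, and the holomorphic extension of the identically zero function is identically zero, whence $(\tau^*)^p f^* \equiv 0$; so $f^*$ is $p$-harmonic. For \emph{properness}, I would argue by contraposition: if $f^*$ were $(p-1)$-harmonic, then $(\tau^*)^{p-1} f^* \equiv 0$ on the open set $\pi^*(W^* \cap U)$, hence $(-1)^{p-1}(\tau^{p-1} f)^* \equiv 0$ there. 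The dual-extension construction $f \mapsto f^*$ is injective on real-analytic $K$-invariant functions (both are restrictions of the same holomorphic germ $\hat f^*$ on $G^\cn$), so this forces $\tau^{p-1} f \equiv 0$ on the corresponding open subset of $G/K$, and by real-analyticity $\tau^{p-1} f \equiv 0$ everywhere, contradicting the properness of $f$.

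The main obstacle I anticipate is making the intertwining relation $\tau^* f^* = -(\tau f)^*$ precise rather than heuristic. The sign flip is transparent on the flat, abelian $\m$-directions, but the Laplace-Beltrami operator on a symmetric space also contains first-order and mixed terms coming from the metric (as the complicated formula \eqref{equation-tau-formula} illustrates in the solvable picture), and one must verify that the holomorphic substitution $X \mapsto iX$ transforms the \emph{entire} operator—including these lower-order contributions—consistently with a single overall sign on each application. I would handle this by working with the Casimir-operator description of $\tau$ on $G/K$, where $\tau$ is manifestly expressible through the $\m$-part of the Casimir element and the substitution $\m \mapsto i\m$ acts uniformly; this avoids coordinate bookkeeping and reduces the whole claim to the algebraic behaviour of the Casimir element under complexification. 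Verifying that the restriction and descent through $K$-invariance commute with the holomorphic extension—so that $f^*$ is genuinely well-defined and the identity descends from $U$ to $U/K$—is the remaining technical point, handled by the standard fact that $K$-invariant holomorphic germs on $G^\cn$ restrict to $K$-invariant analytic functions on both real forms.
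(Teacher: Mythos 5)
Your proposal is correct, but it is worth being clear about what the paper actually does here: its entire proof is a two-line citation, deferring to the duality principle of Theorem 7.1 of \cite{Gud-Sve-1} (for harmonic morphisms) and Theorem 8.1 of \cite{Gud-Mon-Rat-1} (for $p$-harmonic functions). What you have written is, in effect, a reconstruction of the argument behind those cited theorems rather than a different route: the holomorphic extension of the $K$-invariant lift to $G^\cn$, the sign flip $\tau^*(f^*)=-(\tau f)^*$ coming from $\m\mapsto i\m$, the induction $(\tau^*)^p f^*=(-1)^p(\tau^p f)^*$, and the properness argument via the identity theorem are exactly the ingredients of the duality principle. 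Your treatment of properness is the part that is genuinely spelled out beyond what the citation gives, and it is sound, though it silently uses two facts you should state: that $U$ is a maximal totally real submanifold of $G^\cn$, so a holomorphic germ vanishing on an open subset of $W^*\cap U$ vanishes identically near that set, and that $G/K$ is connected, so real-analytic continuation of $\tau^{p-1}f\equiv 0$ from an open set propagates globally. The one step you leave at sketch level --- the intertwining relation itself --- is also where the real work sits; your instinct to use the Casimir description is the right fix, since for a symmetric metric $\tau$ is the action of the $\m$-part of the Casimir element on $K$-invariant functions (no first-order terms survive at the base point), and the substitution $X\mapsto iX$ then acts uniformly on the whole operator, multiplying it by $-1$. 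In short: the paper buys brevity and rigor by outsourcing to the literature; your version buys self-containedness at the cost of having to make the Casimir computation precise, which is exactly the content of the theorems the paper cites.
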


\begin{proof}  The result is a direct consequence of the duality principle introduced for harmonic morphisms in Theorem 7.1 of \cite{Gud-Sve-1} and then developed further for $p$-harmonic functions in Theorem 8.1 of \cite{Gud-Mon-Rat-1}.
\end{proof}

To see how this principle works explicitly, in the real hyperbolic case, we refer the reader to Theorem 5.3 of \cite{Gud-15}.



\appendix


\section{The Laplace-Beltrami Operator}
\label{appendix-laplace-beltrami}

This appendix can be seen as our engine room.  Here we prove the global formula
\begin{equation}\label{equation-tau-formula-appendix}
\tau(\phi) = t^2\,\frac{\partial^2 \phi}{\partial t^2} + (1-n) \, t\, \frac{\partial \phi}{\partial t}
+ \sum_{i,k,\alpha=1}^m\sum_{j,\ell,\beta=1}^{n_i, n_k, n_\alpha} t^{2\lambda_i} P^{ik}_{j\ell}(x) \frac{\partial}{\partial x^k_\ell} \left( P^{i\alpha}_{j\beta}(x) \frac{\partial \phi}{\partial x^\alpha_\beta} \right)
\end{equation}
for the Laplace-Beltrami operator $\tau$ on the rank-one Lie groups of Iwasawa type.
Here and throughout the rest of this appendix we use the notation introduced in Section  \ref{section-solvable-lie-groups}.
We recall that, by definition, the Laplace-Beltrami operator on the Lie group $S$ satisfies
\begin{equation}\label{eq-tau-definition}
\tau=A^2-\nabla_AA+\sum_{i=1}^m\sum_{j=1}^{n_i}\bigl((X^i_j)^2-\nabla_{X^i_j}{X^i_j}\bigr),
\end{equation}
since the left-invariant vector fields $A, X^i_j\in \s$ form an orthonormal basis for the Lie algebra $\s$.
Our main goal is therefore to obtain global formulae for these left-invariant vector fields.

\medskip

We begin by noting that the Baker-Campbell-Hausdorff formula implies that the group law on $N$ is given by
\begin{equation*}
\exp_\n(X) \exp_\n(Y) = \exp_\n(X*Y),
\end{equation*}
where
\begin{eqnarray}\label{eq-group-law}
X*Y &=& X + \frac{\ad(X)}{1 - e^{-\ad(X)}} Y + O(Y^2) \nonumber\\[0.1cm]
&=& X + \sum_{r=0}^{m-1} \frac{B_r}{r!}\ad(X)^r Y + O(Y^2),
\end{eqnarray}
and where $B_r$ are the Bernoulli numbers\footnote{Here one should use the convention $B_1 = 1/2 > 0$.}.
Note that the sum in (\ref{eq-group-law}) stops at $r=m-1$ since $\n$ is $m$-step nilpotent.

The following technical result will be needed later to calculate the coordinate expressions of the left-invariant vector fields $A, X^i_j \in \s$. 

\begin{lemma}\label{lemma-group-law-expansion}
Let
\begin{equation*}
X = \sum_{k=1}^m \sum_{l=1}^{n_k} x^k_l X^k_l
\end{equation*}
be an arbitrary element of $\n$.

\begin{enumerate}
\item[(i)] For any integer $r \geq 1$, we have
\begin{equation*}
\ad(X)^r X^i_j = \sum_{\alpha = 1}^m \sum_{\beta = 1}^{n_\alpha} p^{i\alpha}_{j\beta}(x, r) \,X^\alpha_\beta
\end{equation*}
where
\begin{eqnarray*}
p^{i\alpha}_{j\beta}(x, 1) &=& \sum_{k=1}^{\alpha-1}\sum_{\ell=1}^{n_k} A^{ki\alpha}_{\ell j\beta} \, x^k_\ell,\\
p^{i\alpha}_{j\beta}(x, r) &=& \sum_{k=1}^{\alpha-1}\sum_{\ell=1}^{n_k} p^{i k}_{j \ell}(x, r-1)\cdot p^{k\alpha}_{\ell\beta}(x, 1), \quad r\geq 2.
\end{eqnarray*}
In particular, each $p^{i\alpha}_{j\beta}(x,r)$ is either identically zero or a homogeneous polynomial in $x$ of degree $r$.

\vspace{0.2cm}

\item[(ii)] For any $s \in \rn$,
\begin{equation}\label{eq-lemma-group-law-formula}
X * (s t^{\lambda_i} X^i_j) = X + s t^{\lambda_i} \sum_{\alpha = 1}^m \sum_{\beta = 1}^{n_\alpha} P^{i\alpha}_{j\beta}(x) \, X^\alpha_\beta + R(s)
\end{equation}
where $R$ denotes the remaining terms satisfying $\dot R(0) = 0$ and
\begin{equation*}
P^{i\alpha}_{j\beta}(x)
=
\begin{cases}
\delta_{i\alpha}\,\delta_{j\beta}, & i \geq \alpha\\[0.2cm]
\displaystyle\sum_{r=1}^{m-1}  \frac{B_r}{r!}\cdot p^{i\alpha}_{j\beta}(x,r), & i < \alpha.
\end{cases}
\end{equation*}
are polynomials of degree less or equal than $m-1$.
\end{enumerate}
\end{lemma}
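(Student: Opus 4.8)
The plan is to prove part (i) by a direct induction on $r$, and then to obtain part (ii) by substituting $Y = s t^{\lambda_i} X^i_j$ into the Baker--Campbell--Hausdorff expansion (\ref{eq-group-law}) and reading off the term that is linear in $s$.

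For part (i), the base case $r=1$ is just the definition of the structure constants: expanding $\ad(X) X^i_j = [X, X^i_j] = \sum_{k,\ell} x^k_\ell [X^k_\ell, X^i_j]$ and using $[X^k_\ell, X^i_j] = \sum_{\alpha,\beta} A^{ki\alpha}_{\ell j\beta} X^\alpha_\beta$ shows that the coefficient of $X^\alpha_\beta$ is $\sum_{k,\ell} A^{ki\alpha}_{\ell j\beta} x^k_\ell$. The essential point is that, by the vanishing (\ref{eq-structure-coefficients-zero}), $A^{ki\alpha}_{\ell j\beta} = 0$ unless $\alpha > \max(i,k)$, so the $k$-sum automatically truncates to $k < \alpha$ and we recover exactly $p^{i\alpha}_{j\beta}(x,1)$; in particular $p^{i\alpha}_{j\beta}(x,r)$ vanishes whenever $i \geq \alpha$, a fact I will use repeatedly. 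For the inductive step I write $\ad(X)^r X^i_j = \ad(X)\big(\ad(X)^{r-1} X^i_j\big)$, insert $\ad(X)^{r-1} X^i_j = \sum_{k,\ell} p^{ik}_{j\ell}(x,r-1) X^k_\ell$, and apply the base case to each $\ad(X) X^k_\ell$. Collecting the coefficient of $X^\alpha_\beta$ reproduces the recursion defining $p^{i\alpha}_{j\beta}(x,r)$, and homogeneity of degree $r$ follows since one multiplies a degree-$(r-1)$ polynomial by the linear polynomial $p^{k\alpha}_{\ell\beta}(x,1)$.

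For part (ii), I substitute $Y = s t^{\lambda_i} X^i_j$ into (\ref{eq-group-law}). Since $Y$ is linear in $s$, the $O(Y^2)$ remainder becomes $R(s) = O(s^2)$, which has $\dot R(0) = 0$, and the $s$-linear contribution is $s t^{\lambda_i} \sum_{r=0}^{m-1} \frac{B_r}{r!} \ad(X)^r X^i_j$. I then split off the $r=0$ summand, which equals $X^i_j$ because $B_0 = 1$, and rewrite the remaining terms via part (i) as $\sum_{\alpha,\beta} \big( \sum_{r=1}^{m-1} \frac{B_r}{r!} p^{i\alpha}_{j\beta}(x,r) \big) X^\alpha_\beta$. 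Comparing coefficients, the $s$-linear part equals $s t^{\lambda_i} \sum_{\alpha,\beta} P^{i\alpha}_{j\beta}(x) X^\alpha_\beta$ with $P^{i\alpha}_{j\beta}(x) = \delta_{i\alpha}\delta_{j\beta} + \sum_{r=1}^{m-1} \frac{B_r}{r!} p^{i\alpha}_{j\beta}(x,r)$.

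It then remains to reconcile this single expression with the case distinction in the statement, and here the vanishing from part (i) does all the work: when $i \geq \alpha$ every $p^{i\alpha}_{j\beta}(x,r)$ with $r \geq 1$ is zero, leaving $P^{i\alpha}_{j\beta}(x) = \delta_{i\alpha}\delta_{j\beta}$, whereas when $i < \alpha$ the Kronecker term drops out and only the Bernoulli sum survives. The degree bound is immediate, as $P^{i\alpha}_{j\beta}$ is a sum of the homogeneous polynomials $p^{i\alpha}_{j\beta}(x,r)$ of degrees $r = 1,\ldots,m-1$. I do not expect a genuine obstacle; the only thing requiring care is the bookkeeping of index ranges, namely verifying that the truncation of the BCH series at $r = m-1$ (coming from $m$-step nilpotency) and the truncation of the inner sums to $k < \alpha$ (coming from (\ref{eq-structure-coefficients-zero})) are precisely the truncations already encoded in the definitions of $p^{i\alpha}_{j\beta}$ and $P^{i\alpha}_{j\beta}$.
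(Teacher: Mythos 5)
Your proposal is correct and takes essentially the same route as the paper's proof: part (i) by induction on $r$ using the vanishing (\ref{eq-structure-coefficients-zero}) of the structure constants, and part (ii) by extracting the $s$-linear term of the Baker--Campbell--Hausdorff expansion (\ref{eq-group-law}) to get the unified formula $P^{i\alpha}_{j\beta}(x)=\delta_{i\alpha}\,\delta_{j\beta}+\sum_{r=1}^{m-1}\frac{B_r}{r!}\,p^{i\alpha}_{j\beta}(x,r)$, then reducing it to the stated case distinction via the inductively proved fact that $p^{i\alpha}_{j\beta}(x,r)=0$ whenever $i\geq\alpha$. The paper establishes that vanishing inside the proof of part (ii) rather than in part (i), but this is only a difference of presentation, not of substance.
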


\begin{proof}
For $r=1$, part (i) follows by a straightforward calculation in which one makes use of (\ref{eq-structure-coefficients-zero}).
The claim for $r\geq 2$ then follows by induction and (\ref{eq-structure-coefficients-zero}). 

\smallskip

For part (ii), we first note that the group law (\ref{eq-group-law}) and part (i) imply that
the formula (\ref{eq-lemma-group-law-formula}) holds with
\begin{equation*}
P^{i\alpha}_{j\beta}(x) = \delta_{i\alpha}\,\delta_{j\beta} + \sum_{r=1}^{m-1}  \frac{B_r}{r!}\cdot p^{i\alpha}_{j\beta}(x,r).
\end{equation*}
To obtain the desired formula, we note that if $i \geq \alpha$ then
\begin{equation*}
p^{i\alpha}_{j\beta}(x,1) = \sum_{k=1}^{\alpha-1}\sum_{\ell=1}^{n_k} A^{ki\alpha}_{\ell j\beta} \, x^k_\ell = 0
\end{equation*}
since $A^{ki\alpha}_{j\ell\beta} = 0$ whenever $\alpha \leq \max(k,i)$.
Assuming that $p^{i\alpha}_{j\beta}(x,r) = 0$ for $i \geq \alpha$, we get
\begin{equation*}
p^{i\alpha}_{j\beta}(x,r+1) = \sum_{k=i+1}^{\alpha-1}\sum_{\ell=1}^{n_k} p^{i k}_{j \ell}(x, r)\cdot p^{k\alpha}_{\ell\beta}(x, 1) = 0
\end{equation*}
since the summation over $k$ is empty whenever $i \geq \alpha$.
Hence, induction shows that $p^{i\alpha}_{j\beta}(x,r) = 0$ for all $r \geq 1$ if $i \geq \alpha$.
This also gives the desired formula for $i \geq \alpha$.
The formula for the case $i < \alpha$ is trivial.
\end{proof}

\medskip

\begin{lemma}\label{lemma-vector-fields}
The left-invariant vector fields $A, X^i_j \in \s$ satisfy
\begin{equation*}
A = t\,\frac{\partial}{\partial t},
\qquad
X^i_j = t^{\lambda_i}\sum_{\alpha = 1}^m \sum_{\beta = 1}^{n_\alpha} P^{i\alpha}_{j\beta}(x) \frac{\partial}{\partial x^\alpha_\beta},
\qquad
\nabla_AA = 0,
\qquad
\nabla_{X^i_j}X^i_j = \lambda_i\,t\,\frac{\partial}{\partial t}.
\end{equation*}
\end{lemma}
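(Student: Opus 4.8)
The plan is to read off the two vector fields directly from the explicit group law together with Lemma \ref{lemma-group-law-expansion}, and then obtain the two covariant derivatives from the Koszul formula for a left-invariant metric.

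For the vector fields I use that $A$ and $X^i_j$ are left-invariant, so it suffices to left-translate their one-parameter subgroups and differentiate at $s=0$. The subgroup generated by $A$ is $s \mapsto (e^s, \exp_\n(0))$; this is the correct normalization since $\Ad(e^s,0)$ scales $\n_i$ by $(e^s)^{\lambda_i} = e^{s\lambda_i}$, whence $\ad(A)|_{\n_i} = \lambda_i\,\mathrm{id}$. The subgroup generated by $X^i_j$ is $s \mapsto (1, \exp_\n(s X^i_j))$. Left-translating by $g = (t,\exp_\n(X))$ and using the explicit group law, the first factor of $g\cdot(e^s,0)$ is $t e^s$ while the $N$-factor is unchanged, so differentiating gives $A = t\,\partial/\partial t$ at once. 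For $X^i_j$ the group law produces $\exp_\n(X)\exp_\n(\mu(t)(sX^i_j)) = \exp_\n(X * (s t^{\lambda_i} X^i_j))$, since $\mu(t)$ scales $\n_i$ by $t^{\lambda_i}$. The $s$-derivative at $0$ of $X * (s t^{\lambda_i} X^i_j)$ is exactly what Lemma \ref{lemma-group-law-expansion} (ii) supplies (the remainder satisfies $\dot R(0)=0$), and since the coordinate $x^\alpha_\beta$ is by definition the coefficient of $X^\alpha_\beta$, reading off these coefficients yields the claimed formula for $X^i_j$.

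For the covariant derivatives I use the Koszul formula, which on left-invariant fields $U,V,W$ reduces (the metric being constant on such fields) to
\[
2\langle \nabla_U V, W\rangle = \langle [U,V],W\rangle - \langle [V,W],U\rangle + \langle [W,U],V\rangle.
\]
Putting $U=V=A$ gives $2\langle\nabla_A A, W\rangle = -2\langle [A,W],A\rangle$; since $[A,W]$ is either $0$ or lies in $\n$, which is orthogonal to $\a \ni A$, this vanishes for every basis vector $W$, so $\nabla_A A = 0$. Putting $U=V=X^i_j$ gives $\langle \nabla_{X^i_j}X^i_j, W\rangle = -\langle [X^i_j, W], X^i_j\rangle$; testing against $W=A$ and using $[X^i_j,A] = -\lambda_i X^i_j$ together with orthonormality gives $\lambda_i$.

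The one step that carries genuine content is the vanishing of $\langle \nabla_{X^i_j}X^i_j, X^k_\ell\rangle = -\langle [X^i_j, X^k_\ell], X^i_j\rangle$. Here the derivation property of $\ad(A)$, recorded just before (\ref{eq-structure-coefficients-zero}), shows that $[X^i_j, X^k_\ell]$ is an eigenvector of $\ad(A)|_\n$ with eigenvalue $\lambda_i + \lambda_k$; since $\lambda_k > 0$ this exceeds $\lambda_i$, so the bracket is orthogonal to the whole eigenspace $\n_i$ and in particular to $X^i_j$, forcing the inner product to be zero. Hence $\nabla_{X^i_j}X^i_j$ has only an $A$-component, equal to $\lambda_i$, giving $\nabla_{X^i_j}X^i_j = \lambda_i A = \lambda_i\, t\,\partial/\partial t$. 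I expect the only places needing care to be keeping the Koszul signs consistent and invoking positivity of the eigenvalues to kill the $\n$-components; the rest is direct substitution into the group law and Lemma \ref{lemma-group-law-expansion}.
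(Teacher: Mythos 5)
Your proof is correct and takes essentially the same route as the paper's: you compute $A$ and $X^i_j$ by left-translating curves through the identity and invoking the group law together with Lemma \ref{lemma-group-law-expansion}, and you obtain both covariant derivatives from the Koszul formula, using $\a\perp\n$ for $\nabla_AA=0$ and the $\ad(A)$-eigenvalue argument (which is exactly how the paper derives (\ref{eq-structure-coefficients-zero})) to kill the $\n$-components of $\nabla_{X^i_j}X^i_j$.
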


\begin{proof}
Let $p = (t, \exp_\n(X))$ denote an arbitrary point on the Lie group $S$, where
\begin{equation*}
X = \sum_{i=1}^m \sum_{j=1}^{n_i} x^i_j X^i_j.
\end{equation*}
Let $L_p:S\to S$ denote the left-translation by the general element $p\in S$.
Then the left-invariant vector field $A:S\to TS$ satisfies
\begin{eqnarray*}
A(p)&=&(dL_p)_e(A(e))\\[0.1cm]
&=& (dL_p)_e \Bigl( \frac{d}{ds}(e^s,\exp_\n(0))\Big|_{s=0} \Bigr)\\
&=&\frac{d}{ds}\Bigl((t, \exp_\n(X))(e^s,\exp_\n(0))\Bigr)\Big|_{s=0}\\
&=&\frac{d}{ds}(te^s, \exp_\n(X))\Big|_{s=0}\\
&=&t\frac{\partial}{\partial t}.
\end{eqnarray*}
Furthermore
\begin{eqnarray*}
X^i_j(p) &=& (dL_p)_e(X^i_j(e))\\[0.1cm]
&=& (dL_p)_e \Bigl( \frac{d}{ds}(1,\exp_\n(sX^i_j))\Big|_{s=0} \Bigr)\\
&=&\frac{d}{ds}\Bigl((t, \exp_\n(X))(s \, t^{\lambda_i} X^i_j)\Bigr)\Big|_{s=0}\\
&=& \frac{d}{ds}(t, \exp_\n (X * (s \, t^{\lambda_i} X^i_j))\Big|_{s=0}\\[0.1cm]
&=& \dot\gamma^i_j(0),
\end{eqnarray*}
where the curve $\gamma^i_j : \rn \to S$ is defined by
\begin{equation*}
\gamma^i_j(s) = (t, \exp_\n (X * (s \, t^{\lambda_i} X^i_j)).
\end{equation*}
It now follows by Lemma \ref{lemma-group-law-expansion} that
\begin{equation*}
\frac{d(t\circ \gamma^i_j)}{ds}\Big|_0 = 0, \quad
\frac{d(x^\alpha_\beta\circ \gamma^i_j)}{ds}\Big|_0 =  t^{\lambda_i} P^{i\alpha}_{j\beta}(x),
\end{equation*}
and hence
\begin{equation*}
X^i_j = t^{\lambda_i}\sum_{\alpha=1}^m\sum_{\beta=1}^{n_\alpha} P^{i\alpha}_{j\beta}(x) \frac{\partial}{\partial x^\alpha_\beta}.
\end{equation*}

To obtain the claimed formulae for the covariant derivatives, we take an arbitrary element $s A + X \in \s$ with $X \in \n$ and apply the Koszul formula to obtain
\begin{equation*}
\langle \nabla_A A, sA+X \rangle = -\langle[sA+X, A],A \rangle = 0,
\end{equation*}
since $\a = [\s,\s]^\perp$.
This shows that the integral curves of the vector field $A\in\a$ are geodesics.
On the other hand
\begin{eqnarray*}
\langle \nabla_{X^i_j}X^i_j, sA+X \rangle
&=& \langle [sA+ X, X^i_j], X^i_j \rangle\\
&=& s \langle \ad(A)X^i_j, X^i_j \rangle + \sum_{\alpha=1}^m\sum_{\beta=1}^{n_\alpha} x^\alpha_\beta \langle [X^\alpha_\beta, X^i_j], X^i_j \rangle\\
&=& s\lambda_i + \sum_{\alpha=1}^m\sum_{\beta=1}^{n_\alpha} A^{\alpha ii}_{\beta jj} x^\alpha_\beta\\
&=&s\lambda_i,
\end{eqnarray*}
since $A^{\alpha ii}_{\beta jj} =0$ by (\ref{eq-structure-coefficients-zero}).
This gives
$$\langle\nabla_{X^i_j}{X^i_j},A\rangle=\lambda_i
\ \ \text{and}\ \
\langle\nabla_{X^i_j}{X^i_j},X^\alpha_\beta\rangle=0,$$
and the result follows.
\end{proof}

After the above technical preparations, the desired formula (\ref{equation-tau-formula-appendix})
for the Laplace-Beltami operator $\tau$ on $S$ follows by inserting the identities from Lemma \ref{lemma-vector-fields} into the general formula (\ref{eq-tau-definition}).

\medskip

Finally, we provide a proof of Lemma \ref{lemma-tau-x^1-x^2}, i.e.\ we show how the formula for the Laplace-Beltrami operator simplifies when we apply it to a function depending only on the $x^1$- and the $x^2$-variables.

\begin{proof}[Proof of Lemma \ref{lemma-tau-x^1-x^2}]
To prove statement (i), note that (\ref{eq-structure-polynomials}) implies that
\begin{equation*}
P^{i1}_{j\ell}(x) = \delta_{i1}\,\delta_{j\ell}.
\end{equation*}
Since $h$ depends only on the $x^1$-variables, we therefore get
\begin{eqnarray*}
\tau(h)
&=& \sum_{i,k,\alpha=1}^m\sum_{j,\ell,\beta=1}^{n_i, n_k, n_\alpha} t^{2\lambda_i} P^{ik}_{j\ell}(x) \frac{\partial}{\partial x^k_\ell} \left( P^{i\alpha}_{j\beta}(x) \frac{\partial h}{\partial x^\alpha_\beta} \right)\\[0.2cm]
&=& \sum_{i,k=1}^m\sum_{j,\ell,\beta=1}^{n_i, n_k, n_1} t^{2\lambda_i} P^{ik}_{j\ell}(x) \frac{\partial}{\partial x^k_\ell} \left( P^{i1}_{j\beta}(x) \frac{\partial h}{\partial x^1_\beta} \right)\\[0.2cm]
&=& t^{2\lambda_1}\sum_{k=1}^m \sum_{j,\ell=1}^{n_1, n_k}
P^{1k}_{j\ell}(x) \frac{\partial^2h}{\partial x^k_\ell \, \partial x^1_j}\\
&=& t^{2\lambda_1} \sum_{j,\ell=1}^{n_1}
P^{11}_{j\ell}(x) \frac{\partial^2h}{\partial x^1_\ell \, \partial x^1_j}
=
t^{2\lambda_1} \Delta_{x^1} \, h.
\end{eqnarray*}

To prove statement (ii), we first observe that
\begin{equation*}
P^{12}_{j\ell}(x) = \frac{1}{2}\sum_{r=1}^{n_1} A^{112}_{rj\ell} \, x^1_r,
\end{equation*}
as is easily seen from (\ref{eq-structure-polynomials}).
Since $h$ depends only on the $x^1$- and $x^2$-variables,
we see that
\begin{eqnarray}\label{eq-tau-lemma-2}
\tau(h)
&=& \sum_{i,k,\alpha=1}^m\sum_{j,\ell,\beta=1}^{n_i, n_k, n_\alpha} t^{2\lambda_i} P^{ik}_{j\ell}(x) \frac{\partial}{\partial x^k_\ell} \left( P^{i\alpha}_{j\beta}(x) \frac{\partial h}{\partial x^\alpha_\beta} \right) \nonumber\\[0.2cm]
&=& \sum_{i,k=1}^m\sum_{j,\ell,\beta=1}^{n_i, n_k, n_1} t^{2\lambda_i} P^{ik}_{j\ell}(x) \frac{\partial}{\partial x^k_\ell} \left( P^{i1}_{j\beta}(x) \frac{\partial h}{\partial x^1_\beta} \right)\nonumber\\[0.2cm]
&+&
\sum_{i,k=1}^m\sum_{j,\ell,\beta=1}^{n_i, n_k, n_2} t^{2\lambda_i} P^{ik}_{j\ell}(x) \frac{\partial}{\partial x^k_\ell} \left( P^{i2}_{j\beta}(x) \frac{\partial h}{\partial x^2_\beta} \right)
\end{eqnarray}
The first sum in (\ref{eq-tau-lemma-2}) becomes
\begin{eqnarray*}
&& \sum_{i,k=1}^m\sum_{j,\ell,\beta=1}^{n_1, n_k, n_\alpha} t^{2\lambda_i} P^{ik}_{j\ell}(x) \frac{\partial}{\partial x^k_\ell} \left( \delta_{i1} \,\delta_{j\beta} \, \frac{\partial h}{\partial x^1_\beta} \right)\\[0.2cm]
&=&
t^{2\lambda_1}\sum_{k=1}^m\sum_{j,\ell=1}^{n_1, n_k}
P^{1k}_{j\ell}(x) \frac{\partial^2 h}{\partial x^k_\ell \,\partial x^1_j}\\[0.2cm]
&=&
t^{2\lambda_1} \sum_{j,\ell=1}^{n_1}
P^{11}_{j\ell}(x) \frac{\partial^2 h}{\partial x^1_\ell \,\partial x^1_j}
+
t^{2\lambda_1} \sum_{j,\ell=1}^{n_1, n_2}
P^{12}_{j\ell}(x) \frac{\partial^2 h}{\partial x^2_\ell \,\partial x^1_j}\\[0.2cm]
&=&
t^{2\lambda_1} \,\Delta_{x^1} \,h
+
\frac{t^{2\lambda_1}}{2}
\sum_{j,\ell=1}^{n_1} \sum_{\beta=1}^{n_2} A^{112}_{j\ell\beta} \, x^1_j \, \frac{\partial^2 h}{\partial x^1_\ell \partial x^2_\beta},
\end{eqnarray*}
where we rename the indices in the final term.
The second sum in (\ref{eq-tau-lemma-2}) satisfies
\begin{eqnarray*}
&& \sum_{i,k=1}^m\sum_{j,\ell,\beta=1}^{n_i, n_k, n_2} t^{2\lambda_i} P^{ik}_{j\ell}(x) \frac{\partial}{\partial x^k_\ell} \left( P^{i2}_{j\beta}(x) \frac{\partial h}{\partial x^2_\beta} \right)\\[0.2cm]
&=&
t^{2\lambda_1} \sum_{k=1}^m\sum_{j,\ell,\beta=1}^{n_1, n_k, n_2} P^{1k}_{j\ell}(x) \frac{\partial}{\partial x^k_\ell} \left( P^{12}_{j\beta}(x) \frac{\partial h}{\partial x^2_\beta} \right)\\[0.2cm]
&+&
t^{2\lambda_2} \sum_{k=1}^m\sum_{j,\ell,\beta=1}^{n_2, n_k, n_2}  P^{2k}_{j\ell}(x) \frac{\partial}{\partial x^k_\ell} \left( P^{22}_{j\beta}(x) \frac{\partial h}{\partial x^2_\beta} \right).
\end{eqnarray*}
Similarly as in statement (i), one can show that the second term in the equation above satisfies
\begin{equation*}
t^{2\lambda_2} \sum_{k=1}^m\sum_{j,\ell,\beta=1}^{n_2, n_k, n_2}  P^{2k}_{j\ell}(x) \frac{\partial}{\partial x^k_\ell} \left( P^{22}_{j\beta}(x) \frac{\partial h}{\partial x^2_\beta} \right)
=
t^{2\lambda_2} \Delta_{x^2}\,h.
\end{equation*}
On the other hand,
\begin{eqnarray*}
&& t^{2\lambda_1} \sum_{k=1}^m\sum_{j,\ell,\beta=1}^{n_1, n_k, n_2} P^{1k}_{j\ell}(x) \frac{\partial}{\partial x^k_\ell} \left( P^{12}_{j\beta}(x) \frac{\partial h}{\partial x^2_\beta} \right)\\[0.2cm]
&=& t^{2\lambda_1} \sum_{k=1}^m\sum_{j,\ell,\beta=1}^{n_1, n_k, n_2} P^{1k}_{j\ell}(x) \frac{\partial P^{12}_{j\beta}}{\partial x^k_\ell} \frac{\partial h}{\partial x^2_\beta}
+
t^{2\lambda_1} \sum_{k=1}^m\sum_{j,\ell,\beta=1}^{n_1, n_k, n_2} P^{1k}_{j\ell}(x)
P^{12}_{j\beta}(x) \frac{\partial^2 h}{\partial x^k_\ell \, \partial x^2_\beta}.
\end{eqnarray*}
Now we observe that
\begin{equation*}
P^{1k}_{j\ell}(x) \frac{\partial P^{12}_{j\beta}}{\partial x^k_\ell}
=
P^{1k}_{j\ell}(x) \cdot A^{k12}_{\ell j\beta}
=
P^{11}_{j\ell}(x) \cdot A^{112}_{\ell j\beta}
= A^{112}_{jj\beta} = 0
\end{equation*}
where we use the skew-symmetry $A^{112}_{rj\ell} = -A^{112}_{jr\ell}$
as well as the fact that $A^{k12}_{\ell j\beta} = 0$ when $k \geq 2$, cf.\ (\ref{eq-structure-coefficients-zero}).
Finally,
\begin{eqnarray*}
&& t^{2\lambda_1} \sum_{k=1}^m\sum_{j,\ell,\beta=1}^{n_1, n_k, n_2} P^{1k}_{j\ell}(x)
P^{12}_{j\beta}(x) \frac{\partial^2 h}{\partial x^k_\ell \, \partial x^2_\beta}\\[0.2cm]
&=& t^{2\lambda_1} \sum_{j,\ell,\beta=1}^{n_1, n_1, n_2} P^{11}_{j\ell}(x)
P^{12}_{j\beta}(x) \frac{\partial^2 h}{\partial x^1_\ell \, \partial x^2_\beta}
+
t^{2\lambda_1} \sum_{j,\ell,\beta=1}^{n_1, n_2, n_2} P^{12}_{j\ell}(x)
P^{12}_{j\beta}(x) \frac{\partial^2 h}{\partial x^2_\ell \, \partial x^2_\beta}\\[0.2cm]
&=&
\frac{t^{2\lambda_1}}{2}
\sum_{j,\ell=1}^{n_1} \sum_{\beta=1}^{n_2} A^{112}_{j\ell\beta} \, x^1_j \, \frac{\partial^2 h}{\partial x^1_\ell \partial x^2_\beta}
+
\frac{t^{2\lambda_1}}{4}
\sum_{j,r,s=1}^{n_1} \sum_{\ell,\beta=1}^{n_2}
A^{112}_{rj\ell} \, A^{112}_{sj\beta} \; x^1_r \, x^1_s \, \frac{\partial^2 h}{\partial x^2_\ell \partial x^2_\beta}.
\end{eqnarray*}
Combining the calculations above, we see the second sum in (\ref{eq-tau-lemma-2}) satisfies
\begin{eqnarray*}
&& \sum_{i,k=1}^m\sum_{j,\ell,\beta=1}^{n_i, n_k, n_2} t^{2\lambda_i} P^{ik}_{j\ell}(x) \frac{\partial}{\partial x^k_\ell} \left( P^{i2}_{j\beta}(x) \frac{\partial h}{\partial x^2_\beta} \right)\\[0.2cm]
&=& t^{2\lambda_2}\Delta_{x^2} \, h
+
\frac{t^{2\lambda_1}}{2}
\sum_{j,\ell=1}^{n_1} \sum_{\beta=1}^{n_2} A^{112}_{j\ell\beta} \, x^1_j \, \frac{\partial^2 h}{\partial x^1_\ell \partial x^2_\beta}\\[0.2cm]
&+&
\frac{t^{2\lambda_1}}{4}
\sum_{j,r,s=1}^{n_1} \sum_{\ell,\beta=1}^{n_2}
A^{112}_{rj\ell} \, A^{112}_{sj\beta} \; x^1_r \, x^1_s \, \frac{\partial^2 h}{\partial x^2_\ell \partial x^2_\beta},
\end{eqnarray*}
and the result follows.
\end{proof}

\end{document}